\documentclass[aos,preprint]{imsart}
\RequirePackage[OT1]{fontenc}
\RequirePackage{amsthm,amsmath,amsfonts,amssymb}
\RequirePackage[numbers]{natbib}

\RequirePackage[colorlinks,citecolor=blue,urlcolor=blue]{hyperref}
\RequirePackage{graphicx}


\startlocaldefs

\newtheorem{theorem}{Theorem}[section]
\newtheorem{lemma}[theorem]{Lemma}
\theoremstyle{remark}


\endlocaldefs

\RequirePackage{myMATH}
\RequirePackage{graphicx, morefloats}
\RequirePackage{lineno}
\newtheorem{assumption}{Assumption}
\newtheorem{proposition}{Proposition}
\RequirePackage[colorlinks,citecolor=blue,urlcolor=blue]{hyperref}
\usepackage{float, booktabs,multirow,lscape}

\usepackage{xr}
\usepackage{xr-hyper} 
\externaldocument[supp-]{SBM-consistency-supp}[SBM-consistency-supp.pdf]

\graphicspath{{./figures/}}

\def\ns{Nowicki-Snijders} 
\newcommand{\te}[2]{$#1_{\mbox{\tiny  (#2)}}$}

\begin{document} 

\begin{frontmatter}
  
  \title{Consistent Bayesian Community Detection} 
  \runtitle{DD SBM}

  \begin{aug}
    \author{\fnms{Sheng} \snm{Jiang}\ead[label=e1]{sheng.jiang@duke.edu}}
    \and 
    \author{\fnms{Surya T.} \snm{Tokdar}\ead[label=e2]{surya.tokdar@duke.edu}}


    \affiliation{Duke University }

    \address{Department of Statistical Science, 
      Duke University,
      \printead{e1,e2}}
    
  \end{aug}

 \maketitle     

 \begin{abstract}
   Stochastic Block Models (SBMs) are a fundamental tool for community detection in network analysis. But little theoretical work exists on the statistical performance of Bayesian SBMs, especially when the community count is unknown.
   This paper studies a special class of SBMs whose community-wise connectivity probability matrix is diagonally dominant, i.e., members of the same community are more likely to connect with one another than with members from other communities.
   The diagonal dominance constraint
   is embedded within an otherwise weak prior,
   and, under mild regularity conditions, the resulting posterior
   distribution is shown to concentrate on the true community count and
   membership allocation as the network size grows to infinity. A
   reversible-jump Markov Chain Monte Carlo posterior computation
   strategy is developed by adapting the allocation sampler of
   \cite{mcdaid2013improved}. Finite sample properties are examined via
   simulation studies in which the proposed method offers competitive
   estimation accuracy relative to existing methods under a variety of
   challenging scenarios. 
  \end{abstract}

    \begin{keyword}[class=MSC]
    \kwd[Primary ]{62F12}
    \kwd[; secondary ]{62F15}
  \end{keyword}

  \begin{keyword}
    \kwd{Bayesian inference}
    \kwd{stochastic block model} 
    \kwd{diagonal dominance}
    \kwd{network analysis}  
    \kwd{community detection} 
  \end{keyword}
\end{frontmatter}
  
\section{Introduction}  
\label{sec:Intro} 
Community detection is the most basic yet central statistical problem in network analysis.  
To determine the number of communities, 
various tests have been constructed based on modularity \citep{zhao2011community},
random matrix theory \citep{bickel2016hypothesis, lei2016goodness}, 
and likelihood ratio \citep{wang2017likelihood}. 
Methods based on information criteria \citep{saldana2017many}
and network cross-validation \citep{chen2018network,li2020network} 
have also been designed.  
In the Bayesian realm, a stochastic block model (SBM) is often employed to
jointly infer the number of communities, 
the connectivity probability matrix, and
the membership assignment
\citep{nowicki2001estimation, mcdaid2013improved, geng2019probabilistic}. 



Despite clear empirical evidence of good statistical performance 
\cite{mcdaid2013improved, geng2019probabilistic},
theoretical guarantees on Bayesian SBMs are a rarity when the number of communities is \textit{unknown}. 
As the only exception,  
\cite{geng2019probabilistic} show 
that the community count may be consistently estimated 
under the restrictive assumptions of a homogeneous SBM with at most three communities. It is unclear if their calculations generalize to more realistic scenarios. It is also not clear if Bayesian SBMs can consistently recover the true membership allocation.


We study a special class of SBMs whose community-wise connectivity probability matrix is diagonally dominant. 
%
This special class offers a stronger encoding of the notion of {\it communities} in networks in the sense that nodes within the same community are \textit{strictly} more likely to connect with each other than with nodes from other communities. 
Crucially, the diagonal dominance condition enables membership allocations to be fully retrieved from the node-wise connectivity probabilities, as long as each community contains at least two nodes.
Of course, the node-wise connectivity probability matrix is estimated from data with statistical error. 
But as long as it is sufficiently ``close'' to the truth,
it is still possible to precisely recover the membership allocation and the community count.

For a Bayesian estimation of the diagonally-dominant SBM under a modified \ns~prior \cite{nowicki2001estimation}, we show the posterior on the node-wise connectivity matrix contracts to the truth in the sup-norm topology. Posterior contraction under sup-norm is necessary to the identification strategy detailed above. \cite{ghosh2019posterior} establish near minimax optimal posterior contraction rates in the $L_2$ norm for dense networks with the true number of communities assumed known. 
However, posterior contraction in $L_2$ or other norms that are weaker than the sup-norm do not grant the identification of the number of communities or the membership assignment from node-wise connectivity probabilities. Our sup-norm posterior contraction calculation applies the Schwartz method \citep{ghosal2000convergence, ghosal2007convergence, ghosal2017fundamentals}.
The key observation is that the sup-norm is dominated by the Hellinger distance in the special context of SBMs, so the tests required by the Schwartz method exist. 

The theoretical gains of the diagonally dominant SBMs come at the price of losing conjugacy with respect to the original \ns~prior. %
%
But posterior computation 
may be carried out with a reasonably efficient reversible-jump Markov chain Monte Carlo (MCMC) algorithm based of the allocation sampler in \cite{mcdaid2013improved}. 
%
Results from extensive numerical studies show that our Bayesian diagonally-dominant SBM 
offers comparable and competitive statistical performance against various alternatives in estimating the community count and membership assignment.

\section{The Diagonally Dominant Stochastic Block Model} 
\label{sec:main-results} 
Suppose an $n \times n$ binary adjacency matrix $A$ is observed,
with entry $A_{ij} = 1$ if node $i$ and node $j$ are connected
and $A_{ij} = 0$ otherwise.  
The stochastic block model (SBM) assumes there are 
$K \in \Z_+$ communities among the $n$ nodes and the connection
between nodes \textit{exclusively} depends on their community membership. 
The community assignment $Z$ partitions nodes $ \{1,...,n\}$ into $K$
non-empty groups and assigns each node with a community label.  
Let the community-wise connectivity probability matrix be $P \in [0,1]^{K \times K}$.
Then, 
\begin{equation}
  \label{eq:SBM} 
 A_{ij}|Z \distras{ind} 
  Ber({P_{{Z(i)}{Z(j)}}})
  \text{ for } 1 \le i < j \le n,
\end{equation}
and $P\left( {{A_{ii}} = 0|Z} \right) = 1$
for $i\in \{1,...,n\}$, assuming no self-loops.
We denote the above SBM model as $SBM(Z,P,n,k)$. 
Due to its simplicity and expressiveness, 
SBM and its variants are fundamental tools for community detection   
\citep[e.g.,][]{karrer2011stochastic, airoldi2008mixed, peixoto2014hierarchical}.

\subsection{Bayesian SBM with conjugate priors} 
\label{sec:stochastic-blockmodel}

For Bayesian estimation of the SBM, \cite{nowicki2001estimation} propose the following conjugate prior: given $K$, 
\begin{equation}
  \label{eq:NS-prior} 
 \begin{split}
{P_{ab}} &\distras{iid} U\left( {0,1} \right), a,b = 1,...,K \\
Z_i &\distras{iid} MN(\pi),  i= 1,...,n \\
\pi &\sim Dir\left( \alpha  \right). 
\end{split}
\end{equation} 
This prior is widely used and adapted to more complicated cases 
in the Bayesian SBM literature
\citep{ghosh2019posterior, van2018bayesian,geng2019probabilistic,mcdaid2013improved}.

For the unknown $K$ case,
to maintain conjugacy, it is natural to place a Poisson prior on $K$ 
\citep{mcdaid2013improved, geng2019probabilistic}.
With conjugacy, \cite{mcdaid2013improved} marginalize out $P$ from the posterior
$\Pi_n(Z,K,P|A)$ 
and develop an efficient ``allocation sampler'' to directly sample from $\Pi_n(Z,K|A)$;  
\cite{geng2019probabilistic} adapt the idea of MFM sampler of \cite{miller2018mixture}
to the SBM case:   
marginalize out $K$ from the posterior $\Pi_n(Z,K,P|A)$, 
and develop a Gibbs sampler sampling from $\Pi_n(Z,P|A)$. 

 \subsection{Our proposal: diagonally dominant SBM}
 \label{sec:proposal}

In this paper, we propose to modify the conjugate specification of
Nowicki and Snijders' prior on the connectivity matrix
$P$ by imposing a diagonal dominance constraint. 
The constraint is imposed in two steps: 
first specify a prior distribution for the diagonal entries of $P$, 
then conditional on the diagonal entries, specify a prior distribution on
the off-diagonal entries such that the off-diagonal entries are
strictly less than their corresponding diagonal entries.

For instance, we specify the following prior:   
\begin{equation}
  \label{eq:proposed-prior} 
  \begin{split}
   & {P_{aa}}|K, \delta
    \distras{iid} U(\delta,1], a\in \{1,...,K\},  \\ 
   & {P_{ab}}|K, \delta, \{P_{aa}\}_{a\in \{1,...,K\}}
     \distras{ind} U(0,P_{aa}\wedge P_{bb} -\delta),
    a<b \in \{1,...,K\},     \\
    &\delta  \propto \log(n)/n, \\ 
    & K \sim Pois(1), 
  \end{split}
\end{equation}
where the hyperparameter $\delta$ is chosen to be a deterministic sequence
that goes to 0 as the network size grows to infinity.
Uniform distributions in (\ref{eq:proposed-prior}) are used for simplicity
and can be replaced with other distributions.

In contrast to the Nowicki and Snijders' priors, our prior specification directly imposes conditional dependence between diagonal entries and off-diagonal entries. 
The dependence matches the idea of ``community'' at the price of losing conjugacy.

The modification is mainly for two reasons.
Firstly, the prior constraint of diagonal dominance offers a neat identification of
the number of communities, and  
allows us to consistently estimate the number of communities and
membership.
(See more details in section \ref{sec:ident-via-diag}.)

Secondly, the resulting posterior under the modified prior is more interpretable.  
Though the prior specification following \cite{nowicki2001estimation} is conjugate, 
off-diagonal entries can be greater than diagonal entries under the prior, that is,
nodes can be more likely to be connected to nodes from other communities than nodes from their own community. 
Such configurations violate the idea of ``community''. 
Consequently, posterior samples of connectivity matrices can violate diagonal dominance and are hard to interprete within the framework of SBM.  

\subsection{$L_2$ minimax rate}
\label{sec:minimax-rates}

This paper studies a special sub-class of SBM. 
One may wonder if the diagonally dominant (DD) SBM actually solves a simpler 
community detection problem.
To answer this question, we calculate the $L_2$ minimax rate of estimation for
DD-SBM and compare it with the minimax rates derived in \cite{gao2015rate}.   

Now, we define the parameter space of DD-SBM.
DD-SBM has the following space of connectivity matrix 
\begin{equation}
  \label{eq:connectivity-matrix-space}
  {S_{k,\delta}} =
  \left\{ {P \in {{\left[ {0,1} \right]}^{k \times k}}:
    {P^T} = P,
    {P_{ii}} > \delta +
    \mathop {\max }\limits_{j \ne i}
    \left( {{P_{ij}}} \right),
    i \in \{1,...,k\} } \right\},
\end{equation}
where $\delta \in [0,1)$ is a constant.  
The key departure from the literature is the diagonal dominance constraint:
$  {P_{ii}} > \delta + \mathop {\max }\limits_{j \ne i} \left( {{P_{ij}}} \right)$,
for all $i \in \{1,...,k\}$. 
Under this constraint, between community connection probabilities are
less than within community connection probabilities by $\delta$. 
The gap is inherited by the node-wise connectivity probability matrix.  

Further with the membership assignment $Z$, 
we can define the space for node-wise connectivity probability matrix:
\begin{equation}
  \label{eq:node-connectivity-space-dd}
  {\Theta _{k,\delta}} = \left\{ {
      T({ZPZ^T})  \in [0,1]^{n \times n}:
      P \in S_{k,\delta},
      Z \in {\cZ_{n,k}}} \right\},
\end{equation}
where  
$\cZ_{n,k}$ denotes the collection of all possible assignment
of $n$ nodes into $k$ communities which have \textit{at least two} elements,
and $T(M) := M - \diag(M)$ for any square matrix $M$.   
The node-wise connectivity probability matrix inherits the structural
assumption of diagonal dominance.
The minimum community size assumption allows
recovering community membership from node-wise connectivity probability matrix. 
It is worthwhile to emphasize that singleton communities are ruled
out. 

The following $L_2$ minimax result implies that DD-SBM estimation is as difficult as the original SBM estimation problem, 
as long as the dominance gap is shrinking at certain rate. 
In our calculation, the gap squared ($\delta^2$) is dominated by
the ``clustering rate''  $\log(k)/n$ 
\citep{gao2015rate, gao2018minimax, klopp2017oracle}.

\begin{proposition}
  \label{prop:minimax}
  For any $ k \in \{1,..,n\}$ and $\delta \precsim  \sqrt{\log(k)/n}$,  
  \begin{equation}
    \label{eq:minimax-dd-l2}
    \inf \limits_{\hat{\theta}}
    \sup \limits_{\theta \in \Theta_{k,\delta}}
    \bE \left[||\hat{\theta}-\theta||_{2}^2\right]
    \asymp 
      \frac{k^2}{n^2}+\frac{\log(k)}{n}.  
  \end{equation}
\end{proposition}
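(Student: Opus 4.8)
The plan is to match the claimed rate from above and below. The upper bound is essentially free: since every $\theta \in \Theta_{k,\delta}$ has the form $T(ZPZ^T)$ with $Z \in \cZ_{n,k}$ and $P \in S_{k,\delta} \subseteq [0,1]^{k\times k}$, the diagonally dominant space is a subset of the unconstrained $k$-block SBM parameter space. Hence its minimax risk is no larger than that of the unconstrained model, and the bound $\precsim k^2/n^2 + \log(k)/n$ follows directly from the constrained least-squares (or MLE) analysis of \cite{gao2015rate}; deleting the diagonal through $T(\cdot)$ perturbs only $n$ of the $n^2$ entries and is negligible. The substance of the proof is therefore the lower bound, which must be re-derived entirely inside $\Theta_{k,\delta}$, since the standard SBM lower-bound constructions need not respect diagonal dominance.

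\textbf{The $k^2/n^2$ term (Assouad).} I would fix a single balanced assignment $Z^\ast$ into $k$ blocks of size $\asymp n/k \ge 2$ and restrict attention to $\theta = T(Z^\ast P (Z^\ast)^T)$. Centering the diagonal of $P$ at $3/4$ and the off-diagonals at $1/4$ leaves a constant dominance slack, far exceeding $\delta$. I then index a hypercube of submodels by perturbing each of the $\binom{k}{2}$ off-diagonal entries by $\pm\epsilon$ with $\epsilon \asymp k/n$ (a small enough constant multiple that the gap stays above $\delta$ for large $n$ and all blocks keep size $\ge 2$). A single coordinate flip alters $\asymp (n/k)^2$ entries of $\theta$ by $\epsilon$, so it contributes $O(1)$ to the Kullback--Leibler divergence while separating the (normalized) squared distance by $\asymp \epsilon^2/k^2 \asymp 1/n^2$. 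Assouad's lemma over the $\asymp k^2$ coordinates then yields a lower bound of order $k^2/n^2$.

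\textbf{The $\log(k)/n$ term (Fano), and the role of $\delta$.} For the clustering term I would instead fix a homogeneous planted-partition block matrix with diagonal $1/2+\rho$ and off-diagonal $1/2-\rho$, and vary the assignment. A Varshamov--Gilbert argument on the label vectors produces $\exp(c\,n\log k)$ balanced assignments that pairwise disagree on $\succsim n$ nodes; relabelling one node changes $\asymp n/k$ entries of $\theta$ by $2\rho$, so a pair has Kullback--Leibler divergence $\asymp n^2\rho^2/k$ and normalized squared separation $\asymp \rho^2/k$. Matching the divergence to the log-cardinality $\asymp n\log k$ forces $\rho^2 \asymp k\log(k)/n$, at which point Fano's inequality delivers a lower bound of order $\rho^2/k \asymp \log(k)/n$. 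The diagonal dominance constraint demands $2\rho > \delta$; since the optimal contrast satisfies $2\rho \asymp \sqrt{k\log(k)/n} \succsim \sqrt{\log(k)/n} \succsim \delta$ precisely under the hypothesis $\delta \precsim \sqrt{\log(k)/n}$, the construction does lie in $\Theta_{k,\delta}$. This is the one place the assumption on $\delta$ enters, and it is exactly what is needed.

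\textbf{Combination and main obstacle.} Taking the larger of the two lower bounds gives $\succsim \max(k^2/n^2,\, \log(k)/n) \asymp k^2/n^2 + \log(k)/n$, which matches the upper bound and closes the argument. I expect the main difficulty to be not the information-theoretic bookkeeping, which parallels \cite{gao2015rate}, but the verification that both hypothesis families remain admissible for the diagonally dominant model. In particular, the clustering contrast $\rho$ must be large enough to enforce the gap $\delta$ yet small enough to respect the Fano divergence budget, and reconciling these two requirements is exactly where the hypothesis $\delta \precsim \sqrt{\log(k)/n}$ is used and cannot be relaxed.
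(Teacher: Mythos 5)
Your upper bound and your treatment of the $k^2/n^2$ term follow the same route as the paper: the upper bound is by inclusion into the unconstrained SBM class of \cite{gao2015rate}, and the nonparametric lower bound is a diagonally dominant re-run of their hypercube construction (the paper pins the diagonal at $1/2$ and takes off-diagonals $\frac12-\delta-\frac{c_1k}{n}\omega_{ab}$; your Assouad variant with diagonal $3/4$ and off-diagonals $\frac14\pm\epsilon$ works just as well, since there the gap is a constant while $\delta\to 0$).

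The genuine gap is in your clustering term, and it is exactly the point the paper's construction is engineered to avoid. In your planted-partition family the dominance gap \emph{coincides} with the statistical contrast: the gap is exactly $2\rho$. Membership in $S_{k,\delta}$ requires the pointwise inequality $2\rho>\delta$, but you justify it by the chain $2\rho\asymp\sqrt{k\log(k)/n}\succsim\sqrt{\log(k)/n}\succsim\delta$, and order relations do not yield pointwise inequalities. Concretely, Fano caps the contrast at $\rho\le c\sqrt{k\log(k)/n}$ for a \emph{specific small} constant $c$ (the worst-pair KL, of order $n^2\rho^2/k$, must stay below a fixed fraction of $\log M\asymp n\log k$), whereas the hypothesis only gives $\delta\le C\sqrt{\log(k)/n}$ for some fixed but \emph{arbitrary} constant $C$. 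For bounded $k$ (say $k=2$) and $C$ large, every $\rho$ small enough for the Fano budget violates $2\rho>\delta$, so your hypothesis family leaves $\Theta_{k,\delta}$ and the argument collapses. The paper decouples the two roles: it keeps Gao et al.'s contrast $\sqrt{c_2\log k/n}\,\omega_a$ but subtracts $\delta$ from every off-diagonal entry as a shift \emph{common to all hypotheses} ($B_a=\frac12-\delta-\sqrt{c_2\log k/n}\,\omega_a$, with $D_2$ adjusted accordingly), so the $\delta$-gap holds by construction for every hypothesis, the shift cancels from all pairwise differences, and the assumption $\delta\precsim\sqrt{\log(k)/n}$ is used only to verify that the forced within/between contrast $\delta+\sqrt{\log k/n}$ in the varying block still obeys the same KL upper bound. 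That decoupling is the one idea your sketch is missing. Two secondary points are fixable bookkeeping but should be flagged: your Varshamov--Gilbert step must pack \emph{partitions} (label vectors modulo relabelling, since your homogeneous $Q$ is permutation-symmetric), which is the ``volume argument'' the paper inherits; and for $k$ large enough that $\sqrt{k\log(k)/n}>1/2$ your $\rho$ is not a valid probability, which requires noting that the $k^2/n^2$ term dominates in that regime.
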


\begin{proof}
  The upper bound follows theorem 2.1 of \cite{gao2015rate} as
  the diagonally dominant connectivity matrix space is a subset of the unconstrained
  connectivity matrix space.

  The lower bound follows the proof of theorem 2.2 of \cite{gao2015rate} but their 
  construction violates the diagonally dominant constraint.
  It turns out a diagonally dominant version of their construction is available. 
  For brevity, we only highlight the differences from the proof in \cite{gao2015rate}.
  
  For the nonparametric rate, we construct the $Q^\omega$ matrix  by
  $Q^\omega_{ab} = Q^\omega_{ba}
  = \frac{1}{2} - \delta - \frac{c_1k}{n} \omega_{ab}$, for $a>b \in \{1,...,k\}$
  and $Q^\omega_{aa} = \frac{1}{2}$, for $a \in \{1,...,k\}$.
  The rest of the proof for the nonparametric rate remains the same. 

  For the clustering rate, we construct the $Q$ matrix with the following form
 $ \begin{bmatrix}
 D_1  & B \\
B^T & D_2 
\end{bmatrix}$, 
where $D_1 = \frac{1}{2} I_{k/2}$,
$B$ follows the same construction of \cite{gao2015rate} except that
$B_a = \frac{1}{2} -\delta -  \sqrt{\frac{c_2 \log k}{n}} \omega_a$ for $a \in \{1,...,k/2\}$,
$D_2 = (\frac{1}{2} - \delta -  \sqrt{\frac{ \log k}{n}} )1_{k/2}1^T_{k/2}
+  (\delta + \sqrt{\frac{ \log k}{n}}) I_{k/2}$.
As $\delta \precsim  \sqrt{\log(k)/n}$, the KL divergence upper bound remains the same. 
The rest of the proof for the clustering rate remains the same
as the entropy calculation and the volume argument are unaffected.

\end{proof}

\section{Consistent Bayesian Community Detection} 
\label{sec:cons-est-k}  

\subsection{Identification Strategy}
\label{sec:ident-via-diag} 

The first consequence of diagonal dominance is that
the node-wise connectivity probability matrix spaces of different
ranks are non-overlapping. 
This observation offers a neat partition of the parameter
space by the number of communities. 
\begin{lemma}
  \label{lemma:kid}
  Suppose $k \neq k^\prime \in \N$, then 
  $\Theta_{k,\delta} \cap \Theta_{k^\prime,\delta^\prime} = \emptyset$  
  for any $\delta, \delta^\prime \ge 0 $.  
\end{lemma}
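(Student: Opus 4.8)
The plan is to show that the membership partition—and in particular the community count—can be read off a node-wise connectivity matrix $\theta$ directly, independently of which $(k,\delta)$ pair it is alleged to arise from. Since any $\theta \in \Theta_{k,\delta}$ has the explicit form $\theta_{ij} = P_{Z(i)Z(j)}$ for $i \neq j$ and $\theta_{ii} = 0$ (the diagonal being erased by $T(\cdot)$), I would first establish a purely $\theta$-based criterion for two nodes to share a community: for $i \neq j$,
\[
  Z(i) = Z(j) \quad \Longleftrightarrow \quad \theta_{il} = \theta_{jl} \text{ for all } l \notin \{i,j\}.
\]
The forward implication is immediate from the entry formula, since $Z(i)=Z(j)=a$ forces $\theta_{il} = P_{a,Z(l)} = \theta_{jl}$ for every $l \notin \{i,j\}$.

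The crux is the reverse implication, and this is exactly where diagonal dominance and the minimum-community-size assumption enter. Arguing by contraposition, suppose $Z(i) = a \neq b = Z(j)$. Because every block of $Z \in \cZ_{n,k}$ contains at least two nodes, I can select a witness $i' \neq i$ with $Z(i') = a$; moreover $i' \neq j$, since $Z(i') = a \neq b = Z(j)$, so $i'$ is an admissible index $l \notin \{i,j\}$. Then $\theta_{i i'} = P_{aa}$ whereas $\theta_{j i'} = P_{ab}$, and the defining inequality of $S_{k,\delta}$ gives $P_{aa} > \delta + \max_{c\neq a} P_{ac} \ge P_{ab}$, whence $\theta_{ii'} \neq \theta_{ji'}$ and the criterion fails. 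Here only $\delta \ge 0$ is used, so the argument is uniform over all admissible gaps $\delta,\delta' \ge 0$; the singleton exclusion is indispensable, because it is precisely the witness $i'$ that exposes the within-community probability $P_{aa}$, which is otherwise unavailable on the zeroed diagonal.

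With the criterion in hand, the lemma follows by a brief identifiability argument. The relation on the right-hand side is defined solely through $\theta$, yet by the criterion it coincides with the true partition $\{Z^{-1}(1),\dots,Z^{-1}(k)\}$; hence this partition, and a fortiori its cardinality $k$, are functions of $\theta$ alone. If some $\theta$ were to lie in $\Theta_{k,\delta}\cap\Theta_{k',\delta'}$, then applying the criterion to its two representations would recover partitions with $k$ and $k'$ blocks respectively, forcing $k = k'$ and contradicting $k \neq k'$. I expect the only genuine work to be the reverse direction of the criterion; once that inequality is secured, the disjointness reduces to bookkeeping.
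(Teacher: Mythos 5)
Your proof is correct and takes essentially the same route as the paper's: both arguments rest on the fact that two nodes in the same community must have identical connectivity rows off the pair, while diagonal dominance (with any $\delta \ge 0$) together with the minimum-community-size-two requirement produces a witness entry $\theta_{ii'} = P_{aa} > P_{ab} = \theta_{ji'}$ that distinguishes nodes from different communities, so no two communities can be merged. Your write-up is in fact more explicit than the paper's terse merging argument, since it isolates the row-equality criterion and pinpoints exactly where the singleton exclusion is used.
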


Secondly, with diagonal dominance, 
it is possible to exactly identify the number of communities, the
membership of every node and
the community-wise connectivity probability matrix from 
node-wise connectivity probability matrix under mild conditions. 
A more rigorous statement is presented in Lemma \ref{lemma:T-inverse}.
The recovery is based on checking each node's connectivity probabilities with
 other nodes, as each node is connected with nodes from its own community with the
highest probability.  
\begin{lemma}
  \label{lemma:T-inverse}
  Suppose $P \in S_{k,\delta}$ for some constant $\delta>0$,
  $\theta = T(ZPZ^T)  $ for some $Z \in \cZ_{n,k}$, 
  $T^{-1}$ recovers both community assignment $Z$ and connectivity matrix $P$ from
  $\theta$.  
\end{lemma}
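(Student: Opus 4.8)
The plan is to exhibit the inverse map explicitly: recover the partition first, and then read off $P$ from the block-constant values of $\theta$. Write $\theta_{ij} = P_{Z(i)Z(j)}$ for $i \neq j$ and $\theta_{ii} = 0$, and for each node $i$ call the vector $(\theta_{il})_{l \neq i}$ its \emph{connectivity profile}. The whole argument rests on the fact that this profile, read off the diagonal, determines a node's community.

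First I would show that two nodes share a community exactly when their profiles agree off the pair, i.e. $Z(i) = Z(j)$ if and only if $\theta_{il} = \theta_{jl}$ for every $l \notin \{i,j\}$. The forward direction is immediate: $Z(i) = Z(j)$ gives $\theta_{il} = P_{Z(i)Z(l)} = P_{Z(j)Z(l)} = \theta_{jl}$ for all $l \notin \{i,j\}$. For the converse I would produce a witness. If $Z(i) = a \neq b = Z(j)$, then since block $a$ has at least two members there is some $l \neq i$ with $Z(l) = a$, and necessarily $l \neq j$; for this $l$ one has $\theta_{il} = P_{aa}$ while $\theta_{jl} = P_{ab}$, and diagonal dominance together with symmetry gives $P_{aa} > \delta + \max_{m \neq a} P_{am} \ge \delta + P_{ab} > P_{ab}$, so the two profiles differ at $l$. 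Hence the relation ``agreeing profiles off the pair'' is precisely ``same community''; its classes are the blocks of $Z$, and their number recovers $k$. This pins down $Z$ up to the unavoidable relabeling of community indices.

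Then, given the recovered partition, I would read off $P$ directly. For distinct blocks $a \neq b$, set $P_{ab} = \theta_{ij}$ for any $i,j$ with $Z(i) = a$ and $Z(j) = b$; for a single block $a$, set $P_{aa} = \theta_{il}$ for any two distinct $i,l$ with $Z(i) = Z(l) = a$, such a pair existing precisely because every block has at least two elements. Each value is well defined because $\theta_{ij}$ depends only on the blocks of $i$ and $j$, and symmetry of $P$ supplies the remaining entries, so $P$ is fully determined.

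The only delicate point is the converse in the first step, and this is exactly where both structural assumptions enter. The minimum community size of two guarantees that the witness $l$ exists, and equivalently that the diagonal entries $P_{aa}$ actually surface among the observed off-diagonal entries of $\theta$, which is what makes $P$ recoverable at all; the strict gap $\delta > 0$ of diagonal dominance then guarantees $P_{aa} > P_{ab}$, so that the witness detects a genuine difference rather than a coincidental tie. I expect the crux to be checking that these two ingredients cooperate, rather than any computation; the rest is bookkeeping.
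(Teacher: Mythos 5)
Your proof is correct, but your identification rule is genuinely different from the paper's. The paper identifies the community of node $i$ as the \emph{argmax set} of its row, $\mathcal{C}_i = \{j : \theta_{ij} = \max_\ell \theta_{i\ell}\}$, using diagonal dominance to force cross-community entries strictly below within-community ones and the minimum community size to ensure $P_{Z(i)Z(i)}$ actually appears in row $i$; you instead identify communities by \emph{profile equality}, declaring $Z(i)=Z(j)$ iff rows $i$ and $j$ of $\theta$ agree off $\{i,j\}$, with the size-two condition supplying a witness coordinate and diagonal dominance supplying the strict inequality $P_{aa} > P_{ab}$ at that witness. The trade-off is instructive. Your criterion needs less: it works whenever the rows of $P$ are pairwise distinct (diagonal dominance is merely one way to certify this), so in the exact-recovery setting it identifies a strictly larger class of SBMs. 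The paper's argmax criterion needs the full dominance gap, but in exchange it is stable under perturbation: if $\theta$ is only known within sup-norm error $r < \delta/2$, the argmax sets are unchanged, which is precisely the content of Lemma \ref{lemma:ID-Z} and is what drives the consistency theorems downstream. Exact profile equality, by contrast, is destroyed by arbitrarily small noise, so your route would not extend to the estimated-$\theta$ setting without being reformulated as approximate equality with tolerance calibrated against $\delta$ --- at which point it essentially becomes the paper's gap argument.
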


\begin{proof}
  Without loss of generality, assume the nodes are ordered by community and
  we can write $Z=[ {\bf{1}}_{n_{1}},...,{\bf{1}}_{n_k}]$ where
  $n_j$ denotes the number of nodes in community $j$ and
  ${\bf{1}}_{n_j}$ is a $n \times 1$ vector with entries in the $j^{th}$
  block being 1. 
  Therefore, the off-diagonal terms of $\theta$ are the off-diagonal terms of
  $  ZP{Z^T} $. 

  Suppose we hope to pin down $i^{th}$ node's community membership.
  We take $i^{th}$ row of $\theta$ and it contains the connectivity
  probabilities of node $i$ and all other nodes. 
  As $Z \in \cZ_{n,k}$ whose minimum community size is two,  
  $\cC_i \equiv \{j: \theta_{ij} = \mathop {\max }\limits_{\ell}\theta_{i\ell}\}$
  is \textit{exactly} the set of node(s) from the community of node
  $i$.
  If $\cC_i$ contains node(s) from other communities, then
  the connectivity probabilities of node $i$
  with those node(s) are cross-community  which are strictly less than the
  within-community connectity probabiilty of node $i$,
  contradicting the construction of $\cC_i$.
  If $\cC_i$ misses node(s) from the community of node $i$,
  then the connectivity probabilities of node $i$
  with those node(s) are within-community  which have
  to match the connectivity probabilities of nodes in $\cC_i$.  
  Therefore, by enumerating the above procedure for all rows of
  $\theta$, $Z$ is identified up to a permutation of columns.  

  To recover $P$ from $\theta$, it suffices to use $Z$ 
  and plug in corresponding values from $\theta$. 
  
\end{proof}

In practice, the \textit{exact} knowledge of node-wise connectivity probability matrix is not
available. 
However, the precise recovery in Lemma \ref{lemma:T-inverse} is possible with 
the estimated node-wise connectivity probability matrix.
This is formalized in Lemma \ref{lemma:ID-Z}. 
We use sup-norm to characterize the accuracy of the knowledge of node-wise connectivity probability matrix.
For any node-wise connectivity matrix $\theta^0$,
there exists $Z_0$ and $P^0$ such that $\theta^0 = T(Z_0P^0 Z_0^T)$.
Without loss of generality, we can fix the column ordering of $Z_0$ so that
$P^0$ is consequently defined.    

\begin{lemma}
  \label{lemma:ID-Z}
  Suppose ${\theta ^0} = T(Z_0P^0 Z_0^T)$
  for some $Z_0 \in \cZ_{n,{k_0}}$, ${P^0} \in {S_{{k_0},\delta }}$ and $\delta >0$. 
  Then,
  $ \{\theta = T(ZPZ^T): ||\theta-\theta^0||_{\infty} \le r,
  Z\in \cZ_{n,k}, P \in S_{k,\delta} \} =
  \{  T(Z_0PZ_0^T): ||P-P^0||_{\infty} \le r, P \in {S_{k_0, \delta}} \} $
  holds for all $r < \delta /2$. 
\end{lemma}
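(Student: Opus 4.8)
The plan is to establish the stated set identity through its two inclusions, treating them asymmetrically in effort. The inclusion ``$\supseteq$'' is routine. Given $\theta = T(Z_0 P Z_0^T)$ with $P \in S_{k_0,\delta}$ and $\|P-P^0\|_\infty \le r$, taking $Z=Z_0$ and keeping the same $P$ immediately exhibits $\theta$ as an element of the left-hand family; moreover $\theta-\theta^0 = T(Z_0(P-P^0)Z_0^T)$, whose off-diagonal entries are exactly $(P-P^0)_{Z_0(i)Z_0(j)}$, so $\|\theta-\theta^0\|_\infty = \|P-P^0\|_\infty \le r$. The only point worth flagging here is that the minimum community size hypothesis $Z_0 \in \cZ_{n,k_0}$ is what forces every entry of $P-P^0$ --- the diagonal ones included --- to surface as some off-diagonal entry of $\theta-\theta^0$, so that the two sup-norms are equal rather than merely comparable.

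The forward inclusion ``$\subseteq$'' carries the content. I would fix $\theta = T(ZPZ^T)$ with $Z \in \cZ_{n,k}$, $P \in S_{k,\delta}$ and $\|\theta-\theta^0\|_\infty \le r < \delta/2$, and reduce the goal to a single structural claim: $Z$ and $Z_0$ induce the same partition of $\{1,\dots,n\}$, whence $k=k_0$. Granting this, a relabeling makes $Z=Z_0$, the matrix $P$ that generates $\theta$ serves as the connectivity matrix in the right-hand family, and the computation from the first paragraph gives $\|P-P^0\|_\infty = \|\theta-\theta^0\|_\infty \le r$ with $P \in S_{k_0,\delta}$. To prove the structural claim I would exploit the gap opened by $r<\delta/2$: writing $a=Z_0(i)$, diagonal dominance gives $\theta^0_{ij}=P^0_{aa}$ when $j$ lies in the $Z_0$-community of $i$ and $\theta^0_{ij}\le P^0_{aa}-\delta$ otherwise, so the sup-norm bound forces every within-community entry of $\theta$ to be at least $P^0_{aa}-r$ and every cross-community entry to be at most $P^0_{aa}-\delta+r$; since $\delta-2r>0$, the former bound strictly exceeds the latter.

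The final step turns this gap into partition equality, and it is here that the main obstacle sits. I would apply Lemma \ref{lemma:T-inverse} to \emph{both} $\theta$ and $\theta^0$ --- legitimate because each is a node-wise diagonally dominant matrix with the same gap $\delta$ and minimum community size two --- so that the $Z$-community of $i$ equals $\{j:\theta_{ij}=\max_\ell\theta_{i\ell}\}$ and the $Z_0$-community of $i$ equals the analogous maximizing set for $\theta^0$. The gap estimate shows the maximum of $\theta_{i\cdot}$ is attained strictly within the $Z_0$-community of $i$, so the $Z$-community of $i$ is contained in the $Z_0$-community of $i$. A one-sided argument of this kind only proves that $Z$ refines $Z_0$; the difficulty is excluding a nearby $\theta$ that splits or merges true blocks. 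This is resolved by symmetry: the hypotheses are unchanged under swapping $(\theta,Z,P)$ with $(\theta^0,Z_0,P^0)$, so the identical argument run in reverse gives the opposite containment, and the two together yield equality of the communities for every $i$, hence $Z\equiv Z_0$ and $k=k_0$. The constancy of $\theta_{ij}$ across each $Z$-block --- itself a consequence of Lemma \ref{lemma:T-inverse} --- is what makes both maximizing sets honest communities rather than artifacts, and is the crux of upgrading refinement to equality.
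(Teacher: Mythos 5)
Your proof is correct, and it shares the paper's core mechanism --- the bound $r<\delta/2$ opens a gap $\delta-2r>0$ that forces the row-maximizer sets of $\theta$ and $\theta^0$ to align, with Lemma \ref{lemma:T-inverse} identifying those maximizer sets as communities --- but you close the argument by a genuinely different device. The paper, after establishing the containment $\cC_i \subseteq \cC_i^0$ (your refinement direction), obtains the reverse containment by contradiction: if $\cC_i^0$ met two distinct $\theta$-communities, diagonal dominance of $P$ would force two entries of $\theta$ in row $i$ to differ by more than $\delta$, while the constancy of $\theta^0$ on $\cC_i^0$ plus the sup-norm bound caps that difference at $2r<\delta$. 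You instead observe that the hypotheses are completely symmetric under swapping $(\theta,Z,P)$ with $(\theta^0,Z_0,P^0)$ --- both are diagonally dominant with the same gap $\delta$ and minimum community size two, and the sup-norm condition is symmetric --- so the one-sided refinement argument run in reverse yields the opposite containment for free. Your route is slightly cleaner in that it avoids the case analysis and makes the interchangeability of the two matrices explicit; the paper's contradiction argument is self-contained within a single direction and displays more explicitly where the diagonal dominance of $P$ (as opposed to $P^0$) enters. Both correctly use the minimum-community-size-two assumption at the end, to convert equality of partitions into $\|P-P^0\|_\infty = \|\theta-\theta^0\|_\infty$.
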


\begin{proof}
  

  Pick any $\theta  \in
  \{\theta = T(ZPZ^T): ||\theta-\theta^0||_{\infty} \le r,
  Z\in \cZ_{n,k}, P \in S_{k,\delta} \} $, 
  define $\cC_i=\{j: \theta_{ij}
  = \mathop {\max}\limits_{\ell}\theta_{i\ell}\}$; 
similarly, for $\theta^0$, define 
    $\cC_i^0=\{j: \theta^0_{ij} = \mathop {\max }\limits_{\ell}\theta^0_{i\ell}\}$. 
  The statement is equivalent to
  $\cC_i = \cC_i^0$ for all $i \in \{1,...,n\}$ and all $\theta $.

  First, note for any $j \in \cC^0_i$ and  $\ell \in \{1,...,n\}\backslash \cC^0_i$,   
  ${\theta _{ij}} - {\theta _{i\ell}} 
  = {\theta _{ij}} - \theta _{ij}^0 
  + \theta _{ij}^0 - \theta _{i\ell}^0 
  + \theta _{i\ell}^0 - {\theta _{i\ell}} 
  > \delta  - 2r > 0$.
  That is, $\cC^0_i$ identifies a set of nodes with higher connectivity probabilities with node $i$ relative to nodes from $\{1,...,n\}\backslash \cC^0_i$.
  Recall $\cC_i$ is the collection of nodes with the highest connectivity probability. 
  Then,  $\cC_i \subseteq \cC_i^0 $ for all $i \in \{1,...,n\}$. 

  If $\cC_i^0$ contains nodes from at least two communities of $\theta$,
  then there exist $j_1, j_2 \in \cC^0_i$, such that
  $|\theta_{i j_1} - \theta_{i j_2}| > \delta$
  as $P\in S_{k,\delta}$. 
  Note  for all $j_1, j_2 \in \cC^0_i$, $\theta _{ij_1}^0 = \theta _{ij_2}^0$, then it follows 
    $|{\theta _{ij_1}} - {\theta _{ij_2}}|  
  = |{\theta _{ij_1}} - \theta _{ij_1}^0 
  + \theta _{ij_1}^0 - \theta _{ij_2}^0 
  + \theta _{ij_2}^0 - {\theta _{ij_2}} |
  \le  |{\theta _{ij_1}} - \theta _{ij_1}^0| + |\theta _{ij_2}^0 - {\theta _{ij_2}} |
  \le 2r <\delta $. 
  Then, the contradiction implies $\cC_i = \cC_i^0$ for all $i$.
  As $\theta $  is  arbitrary, 
    $\cC_{i}=\cC_{i}^0$ for all $i \in \{1,...,n\}$ and for all $\theta$.  
\end{proof}

\subsection{Posterior Concentration} 
\label{sec:post-contr}

To study the asymptotic behavior of the diagonally dominated SBM,  
we make the following assumptions on the prior specification.
The prior specification in Assumption \ref{assumption:DD} and
\ref{assumption:prior} is indexed by  $n$, the number of nodes in the
network , and can be interpreted as a sequence of prior distributions.

\begin{assumption}(Prior mass on the parameter space)
  \label{assumption:DD}
  There exists $\bar{\delta}\in (0,1)$ such that
  for all $0< \delta < \bar{\delta}$ and $k>1$,  
  ${\Pi _n}\left( {{S_{k,\delta }}}|K=k \right)
  \ge 1 - {e^{ - {n^2}{\delta }}}$. 
\end{assumption}

Assumption \ref{assumption:DD} requires that the prior specification 
is essentially diagonally dominant. 
Under Nowicki and Snijders' prior, conditional on $k$ communities,
the prior probability of diagonal dominance is $1/k^k$. 
Therefore,  Nowicki and Snijders' prior does not satisfy Assumption \ref{assumption:DD}. 

\begin{assumption}(Prior decay rates) 
  \label{assumption:prior}
  \begin{enumerate} 
  \item (Prior on $P$ conditional on $K$ and $\delta$)

    For $a \in \{1,...,k\}$, diagonal entries $\{P_{aa}\} $ are independent with prior density  
    $\pi_n (P_{aa}|K,\delta)
    \ge
    e^{-C \log(n) P_{aa}} 1_{ \{P_{aa}\in (\delta, 1)\}}$ 
    for some positive constant $C$ independent of $a\in \{1,...,k\}$.

    For $ a<b \in \{1,...,k\}$, off-diagonal entries $\{P_{ab}\}_{a\in \{1,...,k\}}$ are conditionally independent 
    on diagonal entries with conditional prior density
    \begin{equation}
      \label{eq:prior-off-diag}
      {\pi _n}\left( {P_{ab}} | \{P_{aa}\}_{a\in \{1,...,k\}},  \delta, K \right)
    \ge e^{ - C  \log(n) ({P_{aa}} \wedge {P_{bb}}) } 1_{\{ P_{ab} \in [0, {P_{aa}} \wedge {P_{bb}}-\delta]\}}
    \end{equation} 
    for  
    some positive constant $C$ independent of $a,b\in \{1,...,k\}$.  

  \item (Prior on $Z$ conditional on $K$)

    The prior on the membership assignment $Z$ satisfies 
    ${\Pi _n}\left( {Z = z|K = k} \right) \ge {e^{ - C n\log (k)}}$
    for all $z \in \cZ_{n,k}$ 
    and 
    for some universal positive constant $C$. 
\item (Prior on $K$)

  The support of $K$ is $[K_n]$ with $K_n \precsim \sqrt{n}$. 
  For $k \in [K_n]$, the prior on $K$ satisfies  
  ${\Pi _n}\left( {K = k} \right) \ge e^{- C k\log (k)}$ 
  for some universal positive constant $C$. 
  \end{enumerate}

\end{assumption}

Assumption \ref{assumption:prior} makes more specific decay rate assumptions on the prior mass of connectivity matrix $P$, the assignment $Z$, and the number of
communities $K$. 
The rate assumption of the prior on $P$ given $K$ and $\delta$
essentially requires the prior density on $P$ is lower bounded away from 0. 
For instance, the uniform prior on $P$ and the Poisson prior on $K$
in (\ref{eq:proposed-prior}) 
 satisfy Assumption \ref{assumption:prior}.

\begin{theorem}
  \label{thm:posterior-contraction}
  Suppose adjacency matrix $A \sim SBM (Z_0, P^0, n, k_0)$,
  let $\theta ^0 = T(Z_0P^0Z_0^T)$, 
  $P^0 \in \Theta_{k_0,\delta_0}$ 
  for some $k_0 \precsim \sqrt{n} $
  and $\delta_0 > 0$, and the number of zero and one entries of $\theta^0$ is
  at most $O(n^2\varepsilon_n)$ where 
  $\varepsilon _n^2 \asymp \frac{\log(k_0)}{n}$.   
  The prior $\Pi_n$ satisfies Assumption \ref{assumption:DD} and \ref{assumption:prior}.
  Then, for all sufficiently large $M$,   
  \[{\bP_{0,n}}{\Pi _n}
    \left( {\theta: 
        ||\theta  - \theta ^0||_\infty  \ge 
        M \varepsilon _n} | A \right) \to 0.\]
\end{theorem}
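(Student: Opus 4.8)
The plan is to verify the three sufficient conditions of the general posterior contraction theorem for independent, non-identically distributed observations \citep{ghosal2007convergence, ghosal2017fundamentals}, treating the $\binom{n}{2}$ edges $\{A_{ij}\}_{i<j}$ as the observations. Since each edge is an independent Bernoulli draw, the effective sample size is $N=\binom{n}{2}\asymp n^2/2$, and the target exponent in every condition is $N\varepsilon_n^2\asymp n^2\varepsilon_n^2\asymp n\log k_0$. Contraction in the sup-norm, rather than in the usual Hellinger or average metric, would be obtained by exploiting the elementary pointwise bound $|p-q|\le\sqrt{2}\,h(p,q)$ for Bernoulli parameters, where $h$ is the Hellinger distance; aggregated over the block structure of the SBM this is the precise sense in which ``the sup-norm is dominated by the Hellinger distance,'' and it is what allows the Hellinger-based tests of the Schwartz machinery to control sup-norm separated alternatives.

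The hardest step is the testing condition: constructing tests $\phi_n$ for $H_0:\theta=\theta^0$ against $H_1:\{\theta:\|\theta-\theta^0\|_\infty\ge M\varepsilon_n\}$ whose two error probabilities decay like $e^{-cn^2\varepsilon_n^2}$. For large $n$ we have $M\varepsilon_n<\delta_0/2$, so Lemma \ref{lemma:ID-Z} applies: every diagonally dominant parameter within sup-norm radius $\delta_0/2$ of $\theta^0$ shares the partition $Z_0$. This splits $H_1$ into (i) same-partition parameters $\theta=T(Z_0 P Z_0^T)$ with $\|P-P^0\|_\infty\ge M\varepsilon_n$, and (ii) parameters at sup-norm distance $\ge\delta_0/2$, which change the partition and are separated using Lemma \ref{lemma:kid} together with the entropy bound below. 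For group (i), a sup-norm violation of size $\ge M\varepsilon_n$ in a block $(a,b)$ is replicated across the $\asymp n_a n_b$ edges joining communities $a$ and $b$, so $\sum_{i<j}h^2(\theta_{ij},\theta^0_{ij})$ is bounded below by this replication factor times a multiple of $M^2\varepsilon_n^2$; a standard likelihood-ratio / Hellinger-affinity bound then gives exponentially small errors, and a global test is assembled by covering $H_1$ with sup-norm balls and summing. Quantifying the replication factor uniformly across blocks, so that sup-norm separation is amplified into aggregated Hellinger separation at the required scale, is the main obstacle and is exactly where the community-size structure must be used.

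For the prior mass condition I would show $\Pi_n(B_n)\ge e^{-Cn^2\varepsilon_n^2}$ for the neighborhood $B_n=\{\theta:\sum_{i<j}\mathrm{KL}(\theta^0_{ij},\theta_{ij})\le n^2\varepsilon_n^2,\ \sum_{i<j}V(\theta^0_{ij},\theta_{ij})\le n^2\varepsilon_n^2\}$, where $V$ is the log-likelihood-ratio variance. The neighborhood is reached by fixing $K=k_0$ and $Z=Z_0$ and letting $P$ range over a sup-norm box of side $\asymp\varepsilon_n$ around $P^0$ inside $S_{k_0,\delta_0}$: away from the boundary each block contributes per-edge KL and variance $\asymp\varepsilon_n^2$, while the $O(n^2\varepsilon_n)$ entries of $\theta^0$ equal to $0$ or $1$ are treated separately, each contributing per-edge KL only $\asymp\varepsilon_n$, so their controlled number keeps the aggregate at $\asymp n^2\varepsilon_n^2$. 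The box has prior mass bounded below by combining the density lower bounds of Assumption \ref{assumption:prior}(1) over the $k_0^2$ entries, the assignment bound $\Pi_n(Z_0\mid k_0)\ge e^{-Cn\log k_0}$ of part (2), and the count bound $\Pi_n(K=k_0)\ge e^{-Ck_0\log k_0}$ of part (3); since $k_0\precsim\sqrt{n}$, the dominant factor is $e^{-Cn\log k_0}=e^{-Cn^2\varepsilon_n^2}$.

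Finally, for the sieve and entropy condition I take the sieve to be the diagonally dominant space itself, using that Assumption \ref{assumption:DD} already forces the prior mass of the non-diagonally-dominant region to be at most $e^{-n^2\delta}$, negligible against $e^{-Cn^2\varepsilon_n^2}$, and that Assumption \ref{assumption:prior}(3) caps $K\le K_n\precsim\sqrt{n}$. A $\rho$-net of $\Theta_{k,\delta}$ in sup-norm is built by selecting one of at most $k^n$ labelings $Z$ and then a $\rho$-net of the $k^2$ free entries of $P$, of size $(C/\rho)^{k^2}$, so that $\log N(\rho,\Theta_{k,\delta},\|\cdot\|_\infty)\lesssim n\log k+k^2\log(1/\rho)$; at $\rho\asymp\varepsilon_n$ and summed over $k\le K_n$ this is $\lesssim n\log k_0\asymp n^2\varepsilon_n^2$, again because $k_0\precsim\sqrt{n}$. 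With the three conditions in force, the cited theorem delivers $\bP_{0,n}\Pi_n(\|\theta-\theta^0\|_\infty\ge M\varepsilon_n\mid A)\to 0$ for all sufficiently large $M$. I expect the testing step — translating a sup-norm discrepancy into aggregated Hellinger separation through block replication — to be the decisive and most delicate part of the argument.
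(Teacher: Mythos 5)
Your proposal follows the same Schwartz--Ghosal--van~der~Vaart skeleton as the paper, and two of its three pillars match the paper's proof essentially step for step: your prior-mass argument (KL/variance neighborhoods contain sup-norm boxes around $\theta^0$, the $O(n^2\varepsilon_n)$ zero/one entries of $\theta^0$ contributing only first-order terms, box mass bounded below by combining the parts of Assumption \ref{assumption:prior}, dominant factor $e^{-Cn\log k_0}$) is precisely Lemma \ref{lemma:KL-dominance} plus Lemma \ref{lemma:prior-mass}, and your sieve-and-entropy argument (diagonally dominant spaces capped at $K_n$, complement mass controlled by Assumption \ref{assumption:DD}, entropy $\precsim n\log K_n + K_n^2\log(1/\varepsilon_n)$) is Lemma \ref{lemma:entropy} together with the union bound closing Section \ref{sec:proof-theorem}.

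The divergence --- and the gap --- is in the testing step. The paper's Lemma \ref{lemma:norm-dominance} is a statement about the \emph{joint} distributions, $\|\theta^0-\theta^1\|_\infty \le 2H(\bP_{\theta^0},\bP_{\theta^1})$, obtained by retaining a single factor in the product formula for the Hellinger affinity; no aggregation over edges is involved. You instead read the dominance per edge and propose to amplify a sup-norm violation into an aggregated separation $\sum_{i<j}h^2(\theta_{ij},\theta^0_{ij}) \succsim n^2\varepsilon_n^2$ by replication across the $\asymp n_an_b$ edges of the offending block. That aggregated separation is indeed what exponentially small test errors require (the affinity of two product measures is at most $\exp(-\tfrac12\sum_{i<j}h^2_{ij})$), and you are right to call this the decisive step; but the amplification is not available under the stated hypotheses. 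Membership matrices in $\cZ_{n,k}$ only need communities of size two, so the block witnessing the sup-norm violation can be the diagonal block of a two-node community, which governs exactly \emph{one} edge: the replication factor is $1$, and the separation is $\asymp M^2\varepsilon_n^2\to 0$, not $\succsim n^2\varepsilon_n^2$. Concretely, take $\theta_1=T(Z_0P^1Z_0^T)$ with $P^1=P^0$ except $P^1_{aa}=P^0_{aa}+M\varepsilon_n$ for a size-two community $a$; then $\theta_1$ is in your sieve and in your same-partition group (i), $\|\theta_1-\theta^0\|_\infty = M\varepsilon_n$, yet $\bP_{\theta_1}$ and $\bP_{\theta^0}$ differ in the law of a single Bernoulli edge, so their total variation distance is $O(\varepsilon_n)$ and no test has even vanishing (let alone exponentially small) sum of errors against it. Neither your covering argument nor Lemma \ref{lemma:ID-Z} (which concerns identification, not statistical information) removes such alternatives from the region being tested, and nothing in the theorem's assumptions lower-bounds community sizes beyond two, so the obstacle you flag as ``the main obstacle'' cannot be overcome as your plan stands.

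To be fair, your diagnosis is sharper than your fix: you correctly sensed that a Hellinger-type separation of order $\varepsilon_n$ alone yields per-test errors of order $e^{-cM^2\varepsilon_n^2}$, which do not decay, and that some edge-level aggregation must enter. The paper's own proof disposes of the testing step in one sentence by appeal to Lemma \ref{lemma:norm-dominance}, whose joint-Hellinger separation is likewise only of order $\varepsilon_n$; so the missing amplification is supplied explicitly in neither argument --- the difference is that the paper asserts the step while your write-up exposes it. As a standalone proof, however, yours is incomplete exactly where you say it is delicate, and completing it would require an extra assumption (e.g., all community sizes $\succsim \sqrt{n/\log k_0}$, so every block contains enough edges to turn a sup-norm discrepancy into an aggregated Hellinger discrepancy of order $n^2\varepsilon_n^2$).
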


The proof of Theorem \ref{thm:posterior-contraction} follows Schwartz method
\citep{schwartz1965bayes, barron1999consistency,ghosal1999posterior,ghosal2007convergence}. 
Details of the proof are deferred to Section \ref{sec:proof-theorem}.

Though exact  $L_\infty$ minimax rates of SBM or DD SBM are unknown,  
$L_\infty$ minimax rates are lower bounded by $L_2$ minimax rates. 
The $L_2$ minimax rate calculation of DD SBM in Proposition \ref{prop:minimax} can be useful
for judging the sharpness of the posterior contraction rate in Theorem \ref{thm:posterior-contraction}. 
As we assume $k_0 \precsim \sqrt{n}$,
the posterior contraction rate in $||\cdot||_\infty$  matches
the $L_2$ minimax rates in Proposition \ref{prop:minimax}, 
and the posterior contraction rate is minimax-optimal.


With Theorem \ref{thm:posterior-contraction} and Lemma \ref{lemma:ID-Z},
we can establish the consistent estimation of the true number of communties and
true membership assignment.
The main result is summarized as follows. 
\begin{theorem}
  \label{thm:main}
  Under the same assumptions of Theorem \ref{thm:posterior-contraction},    
  \[ {\bP_{0,n}}\left[ {{\Pi _n} \left(
          \{K=k_0\} \cap \{Z = {Z_0}\} |A \right)} \right] \to 1. \] 

\end{theorem}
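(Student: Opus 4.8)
The plan is to combine the sup-norm posterior contraction of Theorem \ref{thm:posterior-contraction} with the deterministic identification guarantee of Lemma \ref{lemma:ID-Z}. The intuition is that once the posterior places almost all of its mass on a sufficiently small sup-norm neighborhood of $\theta^0$, every node-wise connectivity matrix in that neighborhood is forced to carry exactly $k_0$ communities and the membership $Z_0$; hence the posterior mass on $\{K = k_0\} \cap \{Z = Z_0\}$ is at least the posterior mass on that neighborhood, which tends to one.

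First I would fix $M$ large enough for Theorem \ref{thm:posterior-contraction} to apply, and record that $\varepsilon_n \to 0$: since $\varepsilon_n^2 \asymp \log(k_0)/n$ with $k_0 \precsim \sqrt{n}$, we have $\varepsilon_n^2 = O(\log(n)/n) \to 0$. Because $\delta_0 > 0$ is fixed, there exists $N$ such that $M\varepsilon_n < \delta_0/2$ for all $n \ge N$. For such $n$, I would apply Lemma \ref{lemma:ID-Z} with $r = M\varepsilon_n$: every $\theta = T(ZPZ^T)$ with $Z \in \cZ_{n,k}$, $P \in S_{k,\delta_0}$ and $||\theta - \theta^0||_\infty \le M\varepsilon_n$ lies in $\{T(Z_0 P Z_0^T): ||P - P^0||_\infty \le M\varepsilon_n,\, P \in S_{k_0,\delta_0}\}$, forcing $k = k_0$ and $Z = Z_0$. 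By the injectivity of $T^{-1}$ on the diagonally dominant class (Lemma \ref{lemma:T-inverse}), this parameter-space inclusion is exact, so as an event
\[ \{\theta : ||\theta - \theta^0||_\infty \le M\varepsilon_n\} \subseteq \{K = k_0\} \cap \{Z = Z_0\}. \]

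Consequently, for $n \ge N$, monotonicity of $\Pi_n(\cdot \mid A)$ gives
\[ \Pi_n\left(\{K = k_0\} \cap \{Z = Z_0\} \mid A\right) \ge \Pi_n\left(||\theta - \theta^0||_\infty < M\varepsilon_n \mid A\right) = 1 - \Pi_n\left(||\theta - \theta^0||_\infty \ge M\varepsilon_n \mid A\right). \]
Taking $\bP_{0,n}$-expectations and invoking Theorem \ref{thm:posterior-contraction}, the second term vanishes, so $\bP_{0,n}\left[\Pi_n(\{K = k_0\} \cap \{Z = Z_0\}\mid A)\right] \to 1$, as claimed.

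The main obstacle I anticipate is the bookkeeping around label non-identifiability: the map $(Z, K, P) \mapsto \theta$ is injective only after quotienting out permutations of community labels, so the event $\{Z = Z_0\}$ must be read as equality of the induced partitions (equivalently, $Z = Z_0$ under a fixed labeling convention, consistent with how $P^0$ was pinned down just before Lemma \ref{lemma:ID-Z}). Once this convention is in force, Lemma \ref{lemma:T-inverse} supplies the needed injectivity and the set inclusion above is rigorous; the remaining care is purely in matching the strict versus non-strict radii ($r < \delta_0/2$ against the complement of $\{||\theta-\theta^0||_\infty \ge M\varepsilon_n\}$), which is routine.
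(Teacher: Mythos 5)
Your proposal is correct and takes essentially the same route as the paper: the paper's proof decomposes the complement of $\{K=k_0\}\cap\{Z=Z_0\}$ into its intersections with $\{\|\theta-\theta^0\|_\infty \le \varepsilon_n\}$ and with $\{\|\theta-\theta^0\|_\infty > \varepsilon_n\}$, then kills the first term via Lemma \ref{lemma:ID-Z} (precisely your event inclusion, stated contrapositively) and the second via Theorem \ref{thm:posterior-contraction}. Your extra remarks on label non-identifiability and the strict versus non-strict radius are finer bookkeeping than the paper itself supplies, but the underlying argument is identical.
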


\begin{proof}
  In light of Theorem \ref{thm:posterior-contraction}, the posterior mass is essentially
on $\{\theta: ||\theta  - \theta _n^0|{|_\infty } \le {\varepsilon _n}\}$. 
Therefore, we leverage Lemma \ref{lemma:ID-Z} to identify $k_0$ and $Z_0$ on the set.

  Define $E_0= \{K=k_0\} \cap \{Z=Z_0\}$. 
  Note the decomposition
  \[ E^c_0
  = \left( E^c_0 \cap \{||\theta  - \theta ^0|{|_\infty } \le {\varepsilon _n}  \}\right)
  \cup 
  \left( E^c_0 \cap \{||\theta  - \theta ^0|{|_\infty }> {\varepsilon _n}  \}\right) \]  
for some $\varepsilon_n$, then
  \begin{equation}
    \label{eq:main}
    {\Pi _n}\left( {E^c_0|A} \right) \le {\Pi _n}\left( {E^c_0,
        ||\theta  - \theta ^0|{|_\infty } \le {\varepsilon _n}|A} \right)
    + {\Pi _n}\left( {||\theta  - \theta ^0|{|_\infty } > {\varepsilon _n}|A} \right)    
  \end{equation}
  where $\varepsilon_n \to 0$ is chosen to match the posterior contraction rate in sup-norm. 

  Then,  the posterior probability of choosing wrong number of communities
  or wrong membership assignment can be upper bounded
  via the identification assumption and 
  convergence of the posterior distribution of $\theta$. 
  For the first part of Equation (\ref{eq:main}),
  the $\delta$ gap assumption of $\theta^0$ satisfies
  $\delta_0 \succsim \varepsilon_n$. Then, by Lemma \ref{lemma:ID-Z},
  for all sufficiently small $\varepsilon_n$,
  $\{||\theta  - \theta ^0|{|_\infty } \le {\varepsilon _n}\}$ is the same as its
  $Z_0$ slice where the implied number of communities is $k_0$. 
  
  For the second part, Theorem \ref{thm:posterior-contraction} implies
  $\bP_0[{\Pi _n}\left( {||\theta  - \theta ^0|{|_\infty } > {\varepsilon _n}|A} \right) ]\to 0$. 

\end{proof}

\subsection{Proof of Theorem \ref{thm:posterior-contraction}}
\label{sec:proof-theorem}

Pioneered by \cite{schwartz1965bayes} and further developed by
\cite{barron1999consistency,ghosal1999posterior,ghosal2007convergence}, 
Schwartz method is the major tool to study posterior concentration
properties of Bayesian procedures as sample size grows to infinity
\citep{ghosal2017fundamentals}. 
Schwartz method seeks for two sufficient conditions to guarantee posterior concentration:
the existence of certain tests and 
prior mass condition. 
The existence of certain tests often reduces to the construction of certain sieves and an entropy condition associated with the sieve,
if the metric under which we wish to obtain posterior contraction is dominated by Hellinger distance.   
The prior mass condition requires sufficient amount of prior mass on some KL neighborhood near the truth.

Establishing convergence in $||\cdot||_\infty$ via the general framework of Schwartz method requires 
$||\cdot||_\infty$ to be dominated by Hellinger distance.
In general, $||\cdot||_\infty$ is (weakly) stronger than Hellinger distance and not
dominated by Hellinger distance. 
However, in the special case of SBM, the parameter space is constrained and the desired dominance holds.
This observation is shown in Lemma \ref{lemma:norm-dominance}. 

\begin{lemma}
  \label{lemma:norm-dominance}
  Suppose $A_{ij} | \theta \distras{IND} Ber(\theta_{ij}) $ for $i<j $ and $i,j \in \{1,...,n\}$,
  then $||\cdot||_{\infty}$ is dominated by Hellinger distance:  
  $||{\theta ^0} - {\theta ^1}|{|_\infty } \le
  2 H\left( {\bP_{\theta ^0}, \bP_{\theta ^1}} \right)$.
\end{lemma}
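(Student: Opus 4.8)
The plan is to exploit the product (independence) structure of the sampling model together with the boundedness of the Hellinger affinity, thereby reducing the matrix claim to a one–coordinate inequality between Bernoulli laws. First I would note that, since $\|\cdot\|_\infty$ is the entrywise maximum, there is a pair $(i^*,j^*)$ with $\|\theta^0-\theta^1\|_\infty = |\theta^0_{i^*j^*}-\theta^1_{i^*j^*}|$, so it suffices to control this single coordinate. Because the $A_{ij}$ are independent across pairs under (\ref{eq:SBM}), the Hellinger affinity $\rho(\bP_{\theta^0},\bP_{\theta^1}) = \int \sqrt{d\bP_{\theta^0}\,d\bP_{\theta^1}}$ factorizes as $\prod_{i<j}\rho\big(Ber(\theta^0_{ij}),Ber(\theta^1_{ij})\big)$. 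Each factor lies in $[0,1]$, so the full affinity is no larger than the single factor at $(i^*,j^*)$; equivalently, writing $H^2 = 1-\rho$, the joint Hellinger distance dominates the one–coordinate Hellinger distance, $H(\bP_{\theta^0},\bP_{\theta^1}) \ge H\big(Ber(\theta^0_{i^*j^*}),Ber(\theta^1_{i^*j^*})\big)$.

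With this monotonicity in hand, it remains to bound $|p-q|$ by the Hellinger distance of the two Bernoulli laws $Ber(p),Ber(q)$. Here $|p-q|$ equals their total variation distance, and the plan is to pass through the standard total variation / Hellinger comparison: writing the total variation as $\tfrac12\sum_{a\in\{0,1\}}|p(a)-q(a)|$, splitting each summand as $(\sqrt{p(a)}-\sqrt{q(a)})(\sqrt{p(a)}+\sqrt{q(a)})$, and applying Cauchy–Schwarz gives $|p-q| \le H\sqrt{2-H^2}\le \sqrt2\,H$ for the single coordinate. Chaining this with the monotonicity above yields $\|\theta^0-\theta^1\|_\infty = |p-q| \le \sqrt2\,H\big(Ber,\,Ber\big) \le \sqrt2\,H(\bP_{\theta^0},\bP_{\theta^1}) \le 2\,H(\bP_{\theta^0},\bP_{\theta^1})$, which is the asserted inequality.

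The only real content is the tensorization step, and it is in fact where the special structure of the SBM enters: independence across node pairs is exactly what lets a statement about one pair of nodes be promoted to a statement about the whole network, and it is the reason the sup-norm — which in general is not dominated by the Hellinger distance — becomes dominated here. I would therefore spend the care on this monotonicity and treat the Bernoulli comparison as routine. A secondary bookkeeping point is the Hellinger convention: under either $H^2 = 1-\rho$ or $h^2 = 2(1-\rho)$ the constant $2$ in the statement is a safe, non-tight upper bound, so I would fix the convention once at the outset and not attempt to optimize the constant.
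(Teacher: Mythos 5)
Your proposal is correct and follows essentially the same route as the paper: both reduce the matrix statement to a single entry by exploiting the product structure (the joint Hellinger affinity factorizes and each factor is at most one), and then compare $|p-q|$ with the Hellinger distance between two Bernoulli laws. The only cosmetic difference is in that last step --- the paper bounds $(\sqrt{p}-\sqrt{q})^2 \ge \tfrac14 (p-q)^2$ directly using $\sqrt{p}+\sqrt{q}\le 2$, whereas you pass through total variation and the Le Cam--Cauchy--Schwarz inequality, which even yields the slightly sharper constant $\sqrt{2}$ in place of $2$.
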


With the norm dominance, the existence of certain tests reduces to construct
a suitable sieve which charges sufficient prior mass
and whose metric entropy is under control. 
In our proof, the sieve is constructed as the
set of all well separated node-wise connectivity probability matrices: 
$\bigcup \nolimits_{k = 1}^{{K_n}} {{\Theta _{k,\delta_n}}}$ 
for some carefully chosen $\delta_n$ and $K_n$.

In light of Lemma \ref{lemma:kid}, the metric entropy of the sieve can be
neatly bounded. 
The entropy calculation is summarized in Lemma \ref{lemma:entropy}. 
\begin{lemma}
  \label{lemma:entropy} 
  Suppose $\varepsilon _n \to 0$ as $n \to \infty$,
 and  $\varepsilon _n \precsim \delta_n $, then 
  metric entropy satisfies 
  \begin{equation}
    \label{eq:entropy} 
    \log N\left( {{\varepsilon _n},
        \bigcup \nolimits_{k = 1}^{{K_n}} {{\Theta _{k,\delta_n}}} ,
        || \cdot |{|_\infty }} \right)
    \precsim
    \left( {n + 1} \right)\log {K_n}
    + \frac{1}{2}{K_n}\left( {{K_n} + 1} \right)
    \log \left( {1/{\varepsilon _n}} \right). 
  \end{equation}
\end{lemma}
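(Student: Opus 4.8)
The plan is to exploit the layered structure of the sieve: a node-wise matrix $\theta = T(ZPZ^T)$ in $\bigcup_{k=1}^{K_n}\Theta_{k,\delta_n}$ is completely determined by a discrete \emph{structural} part, namely the rank $k\le K_n$ together with the assignment $Z \in \cZ_{n,k}$, and a continuous \emph{parametric} part, namely the community-wise matrix $P \in S_{k,\delta_n}$. I would therefore decompose
\[
\bigcup_{k=1}^{K_n}\Theta_{k,\delta_n}
= \bigcup_{k=1}^{K_n}\ \bigcup_{Z \in \cZ_{n,k}} \Phi_Z\!\left(S_{k,\delta_n}\right),
\qquad \Phi_Z(P) := T(ZPZ^T),
\]
and bound the covering number of the union by the sum of the covering numbers of the pieces (subadditivity of covering numbers). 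The logarithm of the number of pieces will produce the combinatorial term $(n+1)\log K_n$, while the covering number of a single piece will produce the parametric term $\tfrac12 K_n(K_n+1)\log(1/\varepsilon_n)$.

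For the combinatorial count, I would bound the number of assignments crudely by the number of label functions, $|\cZ_{n,k}| \le k^n$ (the minimum-size-two restriction only shrinks this set), so that $\sum_{k=1}^{K_n}|\cZ_{n,k}| \le \sum_{k=1}^{K_n} k^n \le K_n^{\,n+1}$, whose logarithm is $(n+1)\log K_n$; identical label permutations merely overcount, which is harmless for an upper bound. For the parametric count, the key observation is that for each fixed $Z$ the map $\Phi_Z$ is $1$-Lipschitz from $(S_{k,\delta_n}, ||\cdot||_\infty)$ to $(\Theta_{k,\delta_n}, ||\cdot||_\infty)$, since every off-diagonal entry of $T(ZPZ^T)$ equals a single entry $P_{Z(i)Z(j)}$; hence any $\varepsilon_n$-net of $S_{k,\delta_n}$ pushes forward to an $\varepsilon_n$-net of $\Phi_Z(S_{k,\delta_n})$, giving $N(\varepsilon_n, \Phi_Z(S_{k,\delta_n}), ||\cdot||_\infty) \le N(\varepsilon_n, S_{k,\delta_n}, ||\cdot||_\infty)$. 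Since $S_{k,\delta_n}$ is a subset of the symmetric cube with $k(k+1)/2$ free coordinates in $[0,1]$, a standard grid argument gives $N(\varepsilon_n, S_{k,\delta_n}, ||\cdot||_\infty) \le (1 + 1/\varepsilon_n)^{k(k+1)/2} \le (1+1/\varepsilon_n)^{K_n(K_n+1)/2}$. Multiplying the two counts and taking logarithms yields the claimed bound, using $\log(1 + 1/\varepsilon_n) \precsim \log(1/\varepsilon_n)$ as $\varepsilon_n \to 0$.

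Two points deserve care. The first, which I expect to be the main (though mild) subtlety, is justifying that the structural decomposition is legitimate rather than wildly wasteful: because $ZPZ^T$ is highly non-injective and a single sup-norm ball could a priori straddle several slices, I would invoke Lemma \ref{lemma:kid} (disjointness across ranks) and Lemma \ref{lemma:ID-Z} to argue that, once $\varepsilon_n \precsim \delta_n$ (so that $\varepsilon_n < \delta_n/2$), every $\varepsilon_n$-ball centered at a sieve point meets only the single $(k,Z)$-slice determined by that center; this confirms that covering each slice separately both suffices and is essentially tight, and it is exactly where the hypothesis $\varepsilon_n \precsim \delta_n$ enters. The second point is mere bookkeeping: the constraint $P_{ii} > \delta_n + \max_{j\ne i}P_{ij}$ makes $S_{k,\delta_n}$ a proper subset of the cube, which only decreases its covering number, so the box bound remains valid. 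Collecting the two logarithmic contributions then gives $\log N\!\left(\varepsilon_n, \bigcup_{k=1}^{K_n}\Theta_{k,\delta_n}, ||\cdot||_\infty\right) \precsim (n+1)\log K_n + \tfrac12 K_n(K_n+1)\log(1/\varepsilon_n)$, as required.
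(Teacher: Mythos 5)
Your proof is correct, and its skeleton coincides with the paper's: decompose the sieve into $(k,Z)$-slices $\Phi_Z(S_{k,\delta_n})$ with $\Phi_Z(P):=T(ZPZ^T)$, bound each slice's covering number by roughly $(1/\varepsilon_n)^{k(k+1)/2}$, count slices by $|\cZ_{n,k}|\le k^n$, and finish with union bounds over $Z$ and $k$. Where you genuinely depart from the paper is the justification of the per-slice bound. The paper routes this step through Lemma \ref{lemma:ID-Z} — which is exactly where its hypothesis $\varepsilon_n \precsim \delta_n$ enters — identifying each sup-norm ball centered at a slice point with the $\Phi_{Z}$-image of the corresponding ball in $P$-space. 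You instead observe that $\Phi_Z$ is $1$-Lipschitz from $(S_{k,\delta_n},||\cdot||_\infty)$ to $(\Theta^Z_{k,\delta_n},||\cdot||_\infty)$, so an $\varepsilon_n$-net of $S_{k,\delta_n}$ pushes forward to an $\varepsilon_n$-net of the slice. This is more elementary and buys something concrete: your argument never uses $\varepsilon_n \precsim \delta_n$, nor Lemmas \ref{lemma:kid} and \ref{lemma:ID-Z}, because covering numbers are subadditive over arbitrary (even overlapping) unions; the upper bound therefore holds even for $\delta_n=0$. It also sidesteps a small looseness in the paper's route, since Lemma \ref{lemma:ID-Z} needs radius strictly below $\delta_n/2$, which "$\varepsilon_n \precsim \delta_n$" does not literally guarantee without constant-tracking. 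Consequently, your final paragraph invoking Lemmas \ref{lemma:kid} and \ref{lemma:ID-Z} to legitimize the decomposition is superfluous for the stated result: disjointness and identifiability would matter only for a matching \emph{lower} bound (tightness), which Lemma \ref{lemma:entropy} does not claim. The remaining discrepancies — your $(1+1/\varepsilon_n)^{k(k+1)/2}$ versus the paper's $(1/\varepsilon_n)^{k(k+1)/2}$, and whether net points must lie inside $S_{k,\delta_n}$ — are absorbed by $\precsim$ and by halving the radius, hence immaterial.
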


The prior mass condition in terms of KL divergence can be reduced 
to a prior mass condition in terms of  $||\cdot||_{\infty}$ norm.  
This observation is summarized in Lemma \ref{lemma:KL-dominance}.

\begin{lemma}
  \label{lemma:KL-dominance}
  The observation model is $A_{ij} | \theta^0 \distras{IND}
  Ber(\theta^0_{ij}) $  
  for $i<j $ and $i,j \in \{1,...,n\}$. 
  Suppose  $C_0 ={{\mathop {\min }\nolimits_{i < j:0 < \theta _{ij}^0 < 1}
      \theta _{ij}^0\left( {1 - \theta _{ij}^0} \right)}}>0 $, 
  and the number of zero and one entries of ${\theta ^0}$ is less than
  $O(n^{2}\varepsilon_n)$ for some  $\varepsilon_n \to 0 $ such that 
  $n^2 \varepsilon_n \to \infty$. 
 If $||{\theta} -  \theta^0 |{|_\infty } \le \varepsilon_n $, 
  then $KL\left( {{\bP_{{\theta ^0}}},{\bP_\theta }} \right)
  \precsim
  C_0^{-1} {n^2} \varepsilon_n^2 $,  
  and ${V_{2,0}}\left( {{\bP_{{\theta ^0}}},{\bP_\theta }} \right)
  \precsim
  C_0^{-1}{n^2} \varepsilon_n^2 $. 
\end{lemma}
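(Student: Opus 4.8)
The plan is to exploit the product structure of the model. Since $A_{ij}\mid\theta\distras{IND}Ber(\theta_{ij})$ over the $\binom{n}{2}$ pairs $i<j$, the log-likelihood ratio $\log(d\bP_{\theta^0}/d\bP_\theta)$ is a sum of independent per-pair terms $L_{ij}=A_{ij}\log(\theta^0_{ij}/\theta_{ij})+(1-A_{ij})\log((1-\theta^0_{ij})/(1-\theta_{ij}))$. Both target quantities therefore decompose additively: $KL(\bP_{\theta^0},\bP_\theta)=\sum_{i<j}KL(Ber(\theta^0_{ij}),Ber(\theta_{ij}))$, and, because the variance of a sum of independent terms is the sum of variances, $V_{2,0}(\bP_{\theta^0},\bP_\theta)=\mathrm{Var}_{\theta^0}(\sum_{i<j}L_{ij})=\sum_{i<j}\mathrm{Var}_{\theta^0}(L_{ij})$. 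It then suffices to bound each summand and count how many summands of each type occur. I would split the pairs into \emph{interior} pairs, where $0<\theta^0_{ij}<1$, and \emph{boundary} pairs, where $\theta^0_{ij}\in\{0,1\}$.

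For interior pairs write $p=\theta^0_{ij}$, $q=\theta_{ij}$, with $|p-q|\le\varepsilon_n$ and $p(1-p)\ge C_0$. The $\chi^2$-bound $KL\le\chi^2$ for Bernoullis gives $KL(Ber(p),Ber(q))\le(p-q)^2/(q(1-q))$; since $p$ is bounded away from $0$ and $1$ by a constant depending only on $C_0$ and $\varepsilon_n\to0$, one has $q(1-q)\ge c\,p(1-p)\ge cC_0$ for all large $n$, so each interior $KL$ summand is $\precsim\varepsilon_n^2/C_0$. For the variance, $L_{ij}$ takes only two values, so $\mathrm{Var}_{\theta^0}(L_{ij})=p(1-p)\,\Delta^2$ with $\Delta=g(p)-g(q)$ and $g(x)=\log(x/(1-x))$ the logit. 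A mean-value expansion gives $\Delta=(p-q)/(\xi(1-\xi))$ for some $\xi$ between $p$ and $q$, whence $\mathrm{Var}_{\theta^0}(L_{ij})=p(1-p)(p-q)^2/(\xi(1-\xi))^2$. The crucial point is that one factor $p(1-p)$ cancels against $(\xi(1-\xi))^2$: since $\xi(1-\xi)$ is comparable to $p(1-p)$ for large $n$, this leaves $\mathrm{Var}_{\theta^0}(L_{ij})\precsim(p-q)^2/(p(1-p))\le\varepsilon_n^2/C_0$, an order $C_0^{-1}$ rather than $C_0^{-2}$ bound. Summing over at most $\binom{n}{2}$ interior pairs yields $\precsim C_0^{-1}n^2\varepsilon_n^2$ for both quantities.

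For boundary pairs, say $\theta^0_{ij}=0$, the sup-norm constraint forces $\theta_{ij}\le\varepsilon_n<1$, keeping the divergence finite, and $KL(Ber(0),Ber(\theta_{ij}))=-\log(1-\theta_{ij})\precsim\varepsilon_n$; the case $\theta^0_{ij}=1$ is symmetric. Multiplying by the assumed $O(n^2\varepsilon_n)$ boundary pairs gives an $O(n^2\varepsilon_n^2)$ contribution to $KL$, which is absorbed into $\precsim C_0^{-1}n^2\varepsilon_n^2$ because $C_0\le1/4$. For the variance the boundary pairs contribute \emph{nothing}: when $\theta^0_{ij}\in\{0,1\}$ the variable $A_{ij}$ is degenerate under $\bP_{\theta^0}$, so $L_{ij}$ is a constant and $\mathrm{Var}_{\theta^0}(L_{ij})=0$. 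Combining the interior and boundary contributions gives both stated bounds. I expect the main obstacle to be the variance step: one must secure the exact cancellation producing the sharp $C_0^{-1}$ factor—a naive bound of $1/(\xi(1-\xi))$ by $1/C_0$ loses a power and yields only $C_0^{-2}$—and must verify that $\xi(1-\xi)\asymp p(1-p)$ holds uniformly over interior pairs, which is precisely where the hypotheses $\varepsilon_n\to0$ and $C_0>0$ (keeping $p$ bounded away from the endpoints) enter.
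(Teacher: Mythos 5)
Your proof is correct and takes essentially the same route as the paper's: the same pair-wise decomposition of $KL$ and $V_{2,0}$, the same interior/boundary split with the $O(n^2\varepsilon_n)$ count absorbing the boundary contribution to $KL$, and the same cancellation of one factor of $\theta^0_{ij}(1-\theta^0_{ij})$ against the squared logit difference to get the sharp $C_0^{-1}$ rate for the variance. The only differences are cosmetic: you use the $\chi^2$ upper bound and the mean-value theorem for the logit where the paper uses Taylor expansions with remainder, and you handle the degenerate boundary pairs in $V_{2,0}$ explicitly (zero variance), a point the paper's displayed sum glosses over.
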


Lemma \ref{lemma:KL-dominance} simplifies the prior mass condition to element-wise
probability calculation.  
Immediately with Assumption \ref{assumption:prior}, we obtain the following
prior mass calculation. 
\begin{lemma}[prior mass condition]
  \label{lemma:prior-mass}
  Suppose $P^0 \in S_{k_0,\delta_0}$ for some $k_0 \precsim \sqrt{n}$  
  and constant $\delta_0\in (0,1)$,  
  and  $\varepsilon_n^2 \asymp \log(k_0)/n$, 
  then 
  under Assumption \ref{assumption:prior},
  there exists a constant $C$ only dependent on $P^0$ and $C_0$ such that 
  \begin{equation}
    \label{eq:prior-mass}
   {\Pi _n}\left( { P:
       ||P - {P^0}|{|_\infty } < C_0{\varepsilon_n} };
    Z=Z_0;K=k_0 |\delta \right)
  \ge e^{-C n^2 \varepsilon_n^2} 
  \end{equation}
  holds for all sufficiently large $n$. 
\end{lemma}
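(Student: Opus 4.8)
The plan is to exploit the hierarchical, conditionally independent structure of the prior to factor the target probability into three pieces and bound each in turn. Writing the event as $\{||P-P^0||_\infty < C_0\varepsilon_n\}\cap\{Z=Z_0\}\cap\{K=k_0\}$ and using that $Z$ and $P$ are conditionally independent given $K$, I would factor
\[
\Pi_n(\,\cdot\mid\delta) = \Pi_n(K=k_0)\,\Pi_n(Z=Z_0\mid K=k_0)\,\Pi_n(||P-P^0||_\infty<C_0\varepsilon_n\mid K=k_0,\delta).
\]
The first two factors come straight from Assumption \ref{assumption:prior}: the prior-on-$K$ bound gives $\Pi_n(K=k_0)\ge e^{-Ck_0\log k_0}$, and the prior-on-$Z$ bound gives $\Pi_n(Z=Z_0\mid K=k_0)\ge e^{-Cn\log k_0}$. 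All the substantive work is in the third factor, the $P$-ball.

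For the $P$-ball I would reduce the sup-norm ball to a product of coordinate intervals, one per diagonal and one per off-diagonal entry, a total of $\tfrac12 k_0(k_0+1)$ entries, and integrate iteratively with the diagonals on the outside. On the diagonal interval $(P^0_{aa}-C_0\varepsilon_n,P^0_{aa}+C_0\varepsilon_n)\cap(\delta,1)$ the density bound $e^{-C\log(n)P_{aa}}\ge n^{-C}$ applies, and since $P^0\in S_{k_0,\delta_0}$ forces $P^0_{aa}>\delta_0>\delta$ for large $n$, the interval has length $\ge C_0\varepsilon_n$; so each diagonal contributes $\gtrsim C_0\varepsilon_n\,n^{-C}$. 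Conditional on such diagonals, each off-diagonal gets the bound $e^{-C\log(n)(P_{aa}\wedge P_{bb})}\ge n^{-C}$ and again contributes $\gtrsim C_0\varepsilon_n\,n^{-C}$. Multiplying yields $\Pi_n(\text{ball}\mid K=k_0,\delta)\ge (C_0\varepsilon_n\,n^{-C})^{k_0(k_0+1)/2}$, whose logarithm is $\gtrsim -k_0^2(\log n+|\log\varepsilon_n|)\gtrsim -k_0^2\log n$, using $\varepsilon_n\gtrsim n^{-1/2}$.

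The delicate point in that step, and the first real obstacle, is checking that the off-diagonal target interval stays inside its data-dependent support $[0,P_{aa}\wedge P_{bb}-\delta]$. With the diagonals within $C_0\varepsilon_n$ of $P^0$, I would show $P_{aa}\wedge P_{bb}-\delta-P^0_{ab}\ge \delta_0-\delta-2C_0\varepsilon_n\ge \delta_0/2>0$ for large $n$; here the diagonal-dominance gap $\delta_0$ of $P^0$ is precisely what keeps $P^0_{ab}$ bounded away from the moving support boundary once $\delta,\varepsilon_n\to0$, so the interval length stays $\ge C_0\varepsilon_n$ and the density bound is legitimate.

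The \emph{main} obstacle is the final bookkeeping. Collecting the three exponents gives $\log\Pi_n(\,\cdot\mid\delta)\gtrsim -Cn\log k_0 - Ck_0\log k_0 - Ck_0^2\log n$, and I must verify each term is $O(n^2\varepsilon_n^2)=O(n\log k_0)$. The $Z$-term is exactly of this order and is the binding one; the $K$-term is smaller since $k_0\log k_0\precsim\sqrt n\log n\precsim n\log k_0$. The critical term is $k_0^2\log n$, where $k_0\precsim\sqrt n$ is indispensable: the map $k\mapsto k^2\log n/(n\log k)$ has logarithmic derivative $k^{-1}(2-1/\log k)>0$ on $[2,\sqrt n]$, hence is maximized at $k=\sqrt n$ where it equals $2$, so $k_0^2\log n\le 2n\log k_0\asymp n^2\varepsilon_n^2$. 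Without the $\sqrt n$ cap this term would dominate and break the bound. Combining, all three contributions are $O(n^2\varepsilon_n^2)$, giving $\Pi_n(\,\cdot\mid\delta)\ge e^{-Cn^2\varepsilon_n^2}$ with $C$ depending only on $P^0$ (through $\delta_0$) and $C_0$; the degenerate case $k_0=1$ is trivial.
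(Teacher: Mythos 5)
Your proposal is correct and follows essentially the same route as the paper's proof: the same three-way factorization over $K$, $Z$, and the $P$-ball, the same entry-wise factorization of the $P$-ball using the conditional independence in Assumption \ref{assumption:prior}, and the same final bookkeeping via $k_0^2\log n\precsim n\log k_0$ and $n^2\varepsilon_n^2\asymp n\log k_0$. Your explicit check that the off-diagonal intervals stay inside the support $[0,P_{aa}\wedge P_{bb}-\delta]$ (via $\delta_0-\delta-2C_0\varepsilon_n\ge\delta_0/2$) makes rigorous a step the paper only asserts in passing.
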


With the above preparation, the proof of Theorem \ref{thm:posterior-contraction} is as follows. 
The structure of the proof follows \cite{ghosal2007convergence}. 

\begin{proof} 
  We first verify prior mass condition. 
  By Lemma \ref{lemma:KL-dominance}, the set 
\[\left\{ {\theta  \in \bigcup \nolimits_{k = 1}^{{K_n}} {{\Theta _{k,0}}} :
      KL\left( {{\bP_{{\theta_n ^0}}},{\bP_\theta }} \right) < n^2\varepsilon _n^2,
      {V_{2,0}}\left( {{\bP_{{\theta_n ^0}}},{\bP_\theta }} \right) <
      n^2 \varepsilon _n^2} \right\}\] 
  contains a sup-norm ball 
$\left\{ {\theta  \in \bigcup \nolimits_{k = 1}^{{K_n}} {{\Theta _{k,0}}}: 
    ||\theta  - {\theta_n ^0}|{|_\infty } < C_0{\varepsilon _n}
    } \right\}$
for some constant $C_0$ only dependent on $\theta^0$.  
Choose $1 \succ {\tau _n} \succ {\varepsilon _n} $,
the sup-norm ball further contains the following sup-norm ball 
$\left\{ {\theta  \in {{\Theta _{k_0,\tau_n}}}: 
    ||\theta  - {\theta_n ^0}|{|_\infty } < C_0{\varepsilon _n}
    } \right\}$. 
By Lemma \ref{lemma:ID-Z}, the sup-norm ball is essentially its $Z_0$
slice which reduces to 
\[{\Pi _n}\left( { P\in S_{k_0,\tau_n} :
        ||P - {P^0}|{|_\infty } < C{\varepsilon _n}
          }; Z=Z_0;K=k_0 \right). \]  
By Lemma \ref{lemma:prior-mass}, the prior mass is further lower
bounded by $e^{-Cn^2\varepsilon_n^2}$ for some constant $C$ only
dependent on $P^0$ and $C_0$. 



  Next, we check the existence of tests. 
  The existence of tests boils down to metric entropy condition and
  prior mass condition of the sieve. 
  The sieve is constructed as $\bigcup \nolimits_{k = 1}^{{K_n}} {{\Theta _{k,\delta_n}}}$
  with $1 \succ {\delta _n} \succsim {\varepsilon _n^2}$.

  Metric entropy condition of the sieve requires the metric entropy is upper bounded by
  $C n^2\varepsilon_n^2$. 
  Clearly, this is satisfied by Lemma \ref{lemma:entropy}.

  It is left to show the prior mass on the sieve.
  Note ${\Pi _n}\left( {{{\left( {\bigcup\nolimits_{k = 1}^{{K_n}}
              {\Theta _{k,{\delta _n}}^{}} } \right)}^c}} \right)
  \le {\Pi _n}\left( {\Theta _{K_n,{\delta _n}}^c} \right)
  = {\Pi _n}\left( {\Theta _{K_n,{\delta _n}}^c|K = K_n} \right)
  \Pi_n(K=K_n)$,  
  then the prior mass on the sieve is also satisfied by
  a union bound: 
  \[\begin{array}{lll}
      {\Pi _n}\left( {\Theta _{k,{\delta _n}}^c|K = k} \right)
      &\le&
            \sum\nolimits_{z \in {\cZ_{n,k}}}^{}
            {{\Pi _n}\left( {\Theta _{k,{\delta _n}}^c|Z = z,K = k} \right)
            {\Pi _n}\left( {Z = z|K = k} \right)} \\
      &\le&
            \mathop {\max }\nolimits_{z \in {\cZ_{n,k}}}
            {\Pi _n}\left({\Theta _{k,{\delta _n}}^c|Z = z,K = k} \right) \\
      &=&             \mathop {\max }\nolimits_{z \in {\cZ_{n,k}}}
            {\Pi _n}\left({T(zPz^T):P \in S _{k,{\delta _n}}^c|Z = z,K = k} \right) \\
      &\le& {\Pi _n}\left({S _{k,{\delta _n}}^c|K = k} \right)  \\
      &\le& e^{- n^2 \delta_n}\\ 
      &\precsim& {e^{ - C{n^2}\varepsilon _n^2}}
    \end{array}\]
  for some constant $C$.

\end{proof}

\section{Posterior Sampler and Inference} 
\label{sec:sampler}

\subsection{Reversible-jump MCMC algorithm}
\label{sec:AS-MH}

Under the diagonally dominant prior (\ref{eq:proposed-prior}),
the posterior distribution is as follows,
\begin{equation}
  \label{eq:posterior}
  {\Pi _n}\left( {Z,K,P|A} \right)
  \propto
  \Pi \left( {A|Z,P} \right)
  {\Pi _n}\left( {P|Z} \right)
  {\Pi _n}\left( {Z|K} \right)
  {\Pi _n}\left( K \right)
\end{equation}
with
\[\begin{array}{lll}
    \Pi \left( {A|Z,P} \right)
    &=&
        \prod\nolimits_{1 \le a \le b \le K}
        {P_{ab}^{{O_{ab}}\left( Z \right)}
        {{\left( {1 - {P_{ab}}} \right)}^{{n_{ab}}\left( Z \right) - {O_{ab}}\left( Z \right)}}} \\
    {\Pi _n}\left( {P|Z,K,\delta_n} \right)
    &=&
        \prod\nolimits_{1 \le a < b \le K}
        {\frac{{{1_{\left( {0 \le {P_{ab}}
        \le
        \left( {{P_{aa}} \wedge {P_{bb}}} \right) - {\delta _n}} \right)}}}}
        {{\left( {{P_{aa}} \wedge {P_{bb}}} \right)
        - {\delta _n}}}} \\
    {\Pi _n}\left( {Z|K} \right)
    &=&
        \frac{{\Gamma \left( K \right)}}
        {{\Gamma \left( n+K \right)}}
        \prod\nolimits_{1 \le c \le K}
        {\Gamma \left( {{n_c}\left( Z \right)}+1 \right)} \\
    {\Pi _n}\left( K \right)
    &\propto&
              \frac{1}{{K!}}{1_{1 \le K \le {K_n}}}. 
\end{array}\]

For comparison, the Nowicki and Snijders' prior is conjugate and the
community-wise connectivity probability matrix
$P$ can be marginalized out in the posterior distribution.
Therefore, posterior inference on $K$ is directly based on posterior
draws from $\Pi_n(Z,K|A)$. 
However, the truncated Nowicki and Snijders' prior loses conjugacy.
Our posterior inference needs to sample from $\Pi_n(P,Z,K|A)$.  

We propose an Metropolis-Hastings algorithm to sample from
(\ref{eq:posterior}). 
The proposal $(Z^*,K^*,P^*)$ is accepted with probability
\begin{equation}
  \label{eq:acceptance}
  \min \left(1,
    \frac{{\Pi _n}\left( {Z^*,K^*,P^*|A} \right)}
    {{\Pi _n}\left( {Z,K,P|A} \right)}
    \frac{\Pi_{prop}(Z,K,P|Z^*,K^*,P^*)}
    {\Pi_{prop}(Z^*,K^*,P^*|Z,K,P)}\right)
\end{equation}
where $\Pi_{prop}$ denotes the density function of the proposal
distribution and $(Z,K,P)$ denotes the current iteration. 

To be specific, the proposal distribution is adapted from the allocation sampler developed in
\cite{mcdaid2013improved}. 
For each iteration of the sampler, the proposal distribution first sample 
$(Z,K)$ in the spirit of the allocation sampler, 
then sample $P$ given $(Z,K)$.
The proposal distribution is decomposed into two parts:  
 conditional on the previous draw $(P,Z,K)$ and data matrix $A$,   
\[{\Pi _{prop}}\left( {{Z^*},{K^*},{P^*}|Z,K,P,A} \right)
  \propto
  {\Pi _{prop}}\left( {{P^*}|{Z^*},A} \right)
  {\Pi _{prop}}\left( {{Z^*},{K^*}|Z,K,P,A} \right)\]
where $P_{ab}^*|{Z^*},A \distras{ind} 
Beta\left( {{O_{ab}^*} + 1, 
    {n_{ab}^*} - {O_{ab}^*} + 1} \right)$
with $O_{ab}^* \equiv O_{ab} (Z^*)$ and
$n_{ab}^* \equiv n_{ab}(Z^*)$, 
and $(Z^*,K^*)|(Z,K,P,A)$ are simulated in the spirit of the
allocation sampler developed in
\cite{mcdaid2013improved,nobile2007bayesian}.

The proposal distribution of $(Z^*,K^*)|(Z,K,P,A)$ follows the allocation sampler 
of \cite{mcdaid2013improved} but it is different 
in the way that connectivity probability matrix $P$ is involved and
used for likelihood evaluation.
In contrast, the allocation sampler of \cite{mcdaid2013improved}
explores the $(Z,K)$ space with $P$ marginalized out. 
Details of the posterior sampler are in the Supplement. 

The expectation of the proposal distribution $\Pi_{prop}(P^*|(Z^*,A))$ is the
ordinary block constant least squares estimator which is widely used
to estimate the connectivity probability matrix in the literature
\citep[see][for instance]{gao2015rate,klopp2017oracle,van2018bayesian}.
As the proposal density matches the likelihood component
$\Pi(A|P^*,Z^*)$, the acceptance rate is a product of prior density
ratios and proposal density ratios.

\subsection{Posterior Inference}
\label{sec:posterior-inference}
Under the 0-1 loss function $\ell(k,k_0)=1_{k=k_0}$, the Bayes estimate of $K$ is its posterior mode. 
As in the Metropolis-Hastings sampler, $K$ communities may contain empty 
communities, we compute the effective number of communities based on samples of $Z$. 

The community assignment is identified up to a label switching. 
In our matrix formulation, the assignment $Z$ is identified up to a
column permutation. 
That is, $ZZ^T$ is invariant to column permutations. 
If the $(i,j)^{th}$ entry of $ZZ^T$ is 1, node $i$ and node $j$ are classified into the
same community by $Z$. 
In addition, the node-wise connectivity $\theta$ is also identified without relabelling concerns.
With the 0-1 loss function $\ell(Z,Z_0) = 1_{({ZZ^T=Z_0Z_0^T})}$, Bayes estimate
of $Z$ is its posterior mode. 
To pin down the posterior mode of $Z$, we can find the posterior mode of $ZZ^T$ and the
corresponding $Z$ is the posterior mode of $Z$.

\section{Numerical Experiments} 
\label{sec:simulations}

Section \ref{sec:cons-est-k} presents asymptotic properties of Bayesian SBM with
diagonally dominant priors which is henceforth abbreviated as ``DD-SBM''. 
This section assesses finite sample properties of DD-SBM under various settings.  


\subsection{Simulation design}  
\label{sec:baseline}

We perform simulation studies for different 
configurations of the number of communities, network size, and overall sparsity of connectivity.   
In particular, we choose $(k_0, n, \rho) \in \{3,5,7\} \times \{50, 75\} \times \{\frac{1}{2}, 1\}$,
and for each $(k_0, n, \rho)$ configuration,   
100 networks are generated from $SBM(Z_0, \rho P^0, n, k_0)$.


To control the source of variation in the synthetic networks, 
the 100 networks share the same community structure $Z_0$ 
where nodes are deterministically and uniformly assigned to $k_0$ communities; 
the 100 networks also share the same connectivity  matrix 
$\rho P^0$.
The randomness in the 100 synthetic networks is only from the
stochastic generation of Bernoulli trials of  $SBM(Z_0, \rho P^0, n, k_0)$.  

We choose the following cases for $P^0$. 
\begin{itemize}
\item Case 1: $  P^0 = 0.6 \times I_{k_0} + 0.2 \times 1_{k_0}1_{k_0}^T$, 
\item Case 2:  $  P^0 = 0.2 \times I_{k_0} + 0.6 \times 1_{k_0}1_{k_0}^T$,  
\item Case 3:  $  P^0 = 0.4 \times I_{k_0} + 0.4 \times 1_{k_0}1_{k_0}^T$,  
\item Case 4: $  P^0 =  0.2 \times I_{k_0} + 0.2 \times 1_{k_0}1_{k_0}^T
  + 0.4 \times 1_{k_0, \lceil{k_0 /2 \rceil{ }} }1_{k_0, \lceil{k_0 /2 \rceil{ }} }^T$, 
\end{itemize}
where $I_k$ denotes identity matrix of rank $k$,   
$1_k$ denotes the $k-$dimensional vector of ones,
and $1_{n,k}$ denotes the $n-$dimensional vector
with the first $k$ elements being 1 and
the rest $(n-k)$ elements being 0.

In the four cases, within community connectivity probabilities are all 0.8.
For simplicity, the between community connectivity probabilities are the same for Case 1-3; 
in Case 1,  cross community connectivity is weak;  
in Case 2,  cross community connectivity is strong;
and in Case 3, cross community connectivity is medium.  
Case 4 combines the structure of Case 1 and Case 3 and
half of the cross community connectivity is strong. 

The reasons for choosing $n \in \{50, 75\}$ are as follows.
Firstly, many networks in natural and social sciences are often of moderate size.
Secondly, 
asymptotically consistent estimators can perform poorly when sample size
is moderate. 
It is more informative to compare methods for networks of moderate size
than that for networks with thousands of nodes.
Thirdly, MCMC algorithms are computationally expensive, and
the computation bottleneck prevents us from networks with more than thousands of nodes. 

As the number of parameters in the SBM grows in the order of $O(k^2_0)$,
the difficulty of community detection increases as $k_0$ grows.
The case of $k_0=7$ imitates the situation of many communities, while
the cases of $k_0 \in \{3,5\}$ imitate networks with moderately many communties.

\subsection{Simulation results}
\label{sec:simulation-results}

For comparison, we also implement Bayesian SBM with 
the Nowicki and Snijders' prior \citep{nobile2007bayesian, geng2019probabilistic}, 
composite likelihood BIC method \citep{saldana2017many},
and network cross-validation \citep{chen2018network}.   
Two posterior samplers for the Nowicki and Snijders' prior are available in the literature: 
the allocation sampler of \cite{mcdaid2013improved},
and the MFM adapted MCMC algorithm of \cite{geng2019probabilistic}.
We use the code provided in the supplementary materials of \cite{geng2019probabilistic} and
choose default values for the hyperparameters in their algorithm.
The Bayesian SBM of  \cite{geng2019probabilistic, mcdaid2013improved} is henceforth denoted as ``c-SBM'' (Bayesian SBM with conjugate priors).
\cite{saldana2017many}  propose composite likelihood BIC to choose the number of
communities, and this method is henceforth 
denoted as ``CLBIC''. 
\cite{chen2018network} design a cross-validation strategy to choose the number of
communities for SBM, 
and it is henceforth denoted as ``NCV''.

\begin{table}[H]
\centering
\begin{tabular}{cccllllllll}
  \toprule
 & && \multicolumn{2}{c}{Case 1} & \multicolumn{2}{c}{Case 2} & \multicolumn{2}{c}{Case 3} & \multicolumn{2}{c}{Case 4} \\
  \cmidrule(lr){4-5}\cmidrule(lr){6-7}\cmidrule(lr){8-9} \cmidrule(lr){10-11}  
$k_0$& $n$ & Method & $\rho=\mbox{\tiny $\frac12$}$ & $\rho=1$ & $\rho=\mbox{\tiny $\frac12$}$ & $\rho=1$ & $\rho=\mbox{\tiny $\frac12$}$ & $\rho=1$ & $\rho=\mbox{\tiny $\frac12$}$ & $\rho=1$ \\ 
  \midrule
  \multirow{8}{*}{$3$}  
  &\multirow{4}{*}{$50$}  
    & DD-SBM & \te{1.8}{1.3} & \te{1.9}{1.9} & \te{1.8}{-1.6} & \te{1.3}{0.0} & \te{0.3}{0.1}
  & \te{2.0}{-1.9} & \te{0.3}{0.1} & \te{1.0}{-0.6}\\ 
    && c-SBM & \te{0.8}{-0.5} & \te{1.9}{-1.9} & \te{1.9}{-1.8} & \te{1.0}{-1.0} & \te{0.2}{-0.0} & \te{1.9}{-1.9} & \te{0.6}{-0.1} & \te{0.9}{-0.8} \\
    && CLBIC & \te{0.5}{-0.2} & \te{1.3}{-1.2} & \te{1.3 }{-1.2} & \te{1.3}{-1.1} & \te{0.0}{0.0} & \te{1.4}{-1.3} & \te{0.6}{-0.3} & \te{1.0}{-0.9}  \\ 
    && NCV & \te{0.9}{-0.6} & \te{2.0}{-2.0} & \te{2.0}{-2.0} & \te{2.0}{-2.0}  & \te{0.0}{0.0} & \te{2.0}{-2.0} & \te{0.9}{-0.3} & \te{0.9}{-0.8}\\ 
 
 \cmidrule(lr){3-11}
  & \multirow{4}{*}{$75$}  
    &  DD-SBM & \te{1.0}{0.5} & \te{2.0}{-1.9} & \te{1.6}{-1.1} & \te{1.1}{-0.6} & \te{0.1}{0.0} & \te{1.9}{-1.9} & \te{0.2}{0.0} & \te{0.9}{-0.7}\\ 
    && c-SBM & \te{0.5}{-0.1} & \te{2.0}{-1.9} & \te{1.6}{-1.3} & \te{1.0}{-1.0} & \te{0.3}{0.0} & \te{1.8}{-1.6} & \te{0.4}{0.0} & \te{0.9}{-0.8} \\ 
    && CLBIC & \te{0.0}{0.0}  & \te{1.0}{-1.0} & \te{0.9 }{-0.8} & \te{1.0}{-0.9} & \te{0.0}{0.0}  & \te{1.0}{-1.0} & \te{0.0}{0.0} & \te{1.0}{-0.9}  \\ 
    && NCV & \te{0.1}{0.0} & \te{2.0}{-2.0} & \te{1.9}{-1.9} & \te{2.0}{-1.9} & \te{0.0}{0.0} & \te{2.0}{-2.0} & \te{0.0}{0.0} & \te{1.0}{-0.9}\\
  \cmidrule(lr){2-11}

  \multirow{8}{*}{$5$}  
  &\multirow{4}{*}{$50$}  
   &  DD-SBM & \te{3.0}{-2.5} & \te{3.9}{-3.9} & \te{3.9}{-3.8} & \te{2.3}{-2.0} & \te{1.2}{0.7} & \te{4.0}{-4.0} & \te{3.6}{-3.6} & \te{2.8}{-2.7}\\ 
  && c-SBM & \te{3.7}{-3.7} & \te{3.9}{-3.9} & \te{4.0}{-4.0} & \te{3.0}{-3.0} & \te{1.4}{-1.0} & \te{3.9}{-3.9} & \te{3.8}{-3.7} & \te{2.9}{-2.9} \\ 
  && CLBIC & \te{3.1}{-3.1} & \te{3.4}{-3.4} & \te{3.3}{-3.3} & \te{3.5}{-3.4} & \te{1.9}{-1.6} & \te{3.4}{-3.3} & \te{3.2}{-3.2} & \te{2.9}{-2.8}  \\ 
  && NCV    & \te{4.0}{-4.0}   & \te{4.0}{-4.0}    & \te{4.0}{-4.0} & \te{4.0}{-4.0} & \te{2.0}{-1.5}  & \te{4.0}{-4.0} & \te{4.0}{-4.0} & \te{3.2}{-3.0}
  \\ \cmidrule(lr){3-11}

  &\multirow{4}{*}{$75$}  
   &  DD-SBM & \te{2.0}{-1.1} & \te{3.9}{-3.9} & \te{3.9}{-3.9} & \te{2.6}{-2.4} & \te{0.5}{0.0} & \te{4.0}{-4.0} & \te{2.3}{-2.0} & \te{2.9}{-2.8}\\ 
   && c-SBM & \te{2.7}{-2.5} & \te{4.0}{-4.0} & \te{4.0}{-4.0} & \te{3.0}{-3.0} & \te{0.8}{-0.3} & \te{4.0}{-3.9} & \te{2.3}{-2.0} & \te{2.9}{-2.9} \\ 
   && CLBIC & \te{2.6}{-2.5} & \te{3.0}{-3.0} & \te{3.0 }{-3.0} & \te{2.9}{-2.9} & \te{0.0}{0.0}  & \te{3.0}{-3.0} & \te{2.8}{-2.8} & \te{2.7}{-2.7}  \\ 
   && NCV    & \te{3.9}{-3.8}   & \te{4.0}{-4.0}    & \te{4.0}{-4.0} & \te{3.9}{-3.9} & \te{0.0}{0.0} & \te{4.0}{-4.0} & \te{3.9}{-3.8} & \te{2.7}{-2.6}\\
  \cmidrule(lr){2-11} 
  
  \multirow{8}{*}{$7$}  
  & \multirow{4}{*}{$50$}  
   &  DD-SBM & \te{5.6}{-5.5} & \te{5.9}{-5.9} & \te{5.9}{-5.9} & \te{4.1}{-3.9} & \te{3.5}{-3.1} & \te{6.0}{-6.0} & \te{6.0}{-6.0} & \te{4.6}{-4.5}\\ 
   && c-SBM & \te{5.9}{-5.9} & \te{5.9}{-5.9} & \te{6.0}{-6.0} & \te{5.1}{-5.0} & \te{4.8}{-4.7} & \te{6.0}{-5.9} & \te{5.9}{-5.9} & \te{5.0}{-4.9} \\ 
   && CLBIC & \te{5.2}{-5.2} & \te{5.3}{-5.3} & \te{5.3}{-5.3} & \te{5.5}{-5.4} & \te{4.9}{-4.8} & \te{5.3}{-5.3} & \te{5.3}{-5.3} & \te{4.9}{-4.8}  \\ 
   && NCV    & \te{6.0}{-6.0}   & \te{6.0}{-6.0}    & \te{6.0}{-6.0} & \te{6.0}{-6.0} & \te{6.0}{-6.0}  & \te{6.0}{-6.0} & \te{6.0}{-6.0} & \te{5.6}{-5.5}
\\ \cmidrule(lr){3-11}

  &\multirow{4}{*}{$75$}  
   &  DD-SBM & \te{4.7}{-4.6} & \te{6.0}{-6.0} & \te{6.0}{-5.9} & \te{4.4}{{-4.3}} & \te{2.0}{-1.4} & \te{6.0}{-5.9} & \te{5.5}{-5.5} & \te{4.8}{-4.8}\\ 
   && c-SBM & \te{5.4}{-5.4} & \te{5.9}{-5.9} & \te{5.9}{-5.9} & \te{5.0}{-5.0} & \te{2.6}{-2.3} & \te{5.9}{-5.9} & \te{5.4}{-5.3} & \te{5.0}{-5.0} \\ 
   && CLBIC & \te{4.9}{-4.8} & \te{5.0}{-5.0} & \te{5.0 }{-5.0} & \te{4.9}{-4.8} & \te{3.5}{-3.4} & \te{5.0}{-5.0} & \te{5.0}{-5.0} & \te{4.7}{-4.7}  \\ 
   && NCV    & \te{6.0}{-6.0}   & \te{6.0}{-6.0}    & \te{6.0}{-6.0} & \te{6.0}{-6.0} & \te{3.5}{-3.2} & \te{6.0}{-6.0} & \te{6.0}{-6.0} & \te{4.8}{-4.8}\\
  \bottomrule 
\end{tabular}
\caption{RMSE and bias of $\hat{K}$ with bias in parentheses.}  
\label{tab:compare-methods-all-reduced}
\end{table}
 


Compared with c-SBM, DD-SBM achieves similar accuracy across different configurations.
To be specific, 
when $k_0=3$, DD-SBM tends to over-estimate the number of communities; 
when $\rho = \frac{1}{2}$ and $k_0 \in \{5,7\}$, DD-SBM is slightly more accurate
than c-SBM in Case 1 and 3 and similarly accurate to c-SBM in Case 2 and 4.
When the posterior samples of connectivity matrix of c-SBM are also diagonally dominant, 
c-SBM is essentially DD-SBM. Therefore, it is reasonable to expect DD-SBM
and c-SBM have similar accuracy in networks generated from diagonally dominant SBM. 

Compared with CLBIC, DD-SBM is less accurate in most cases.
This is due to the design of $P^0$ in Case 1 - 3, 
such that the working likelihood of CLBIC is close to the true likelihood.
In Case 4, the true likelihood is more complicated than the working
likelihood of CLBIC, 
and the advantage of CLBIC over DD-SBM is less obvious. 

Compared with NCV, DD-SBM is more accurate in most cases. 
To be specific, 
when $k_0=3$ and $\rho = \frac{1}{2}$,  
DD-SBM tends to over-estimate the number of communities;
in other configurations, DD-SBM is more accurate than NCV. 

Case 2 is the most difficult as the between community connectivity probability is
very close to within community connectivity probability.
Indeed, the methods nearly uniformly choose one big community, 
except that CLBIC sometimes chooses two communities.

To assess the membership assignment accuracy, we use the
Hubert-Arabie adjusted Rand index \citep{hubert1985comparing, rand1971objective} to measure the
agreement between two clustering assignments. 
The index is expected to be 0 if two independent assignments are compared, and
is 1 if two equivalent assignments are compared. 
Though the adjusted Rand index tends to capture the disagreement among large clusters,
community sizes in our simulation study are about the same and
the adjusted Rand index is still a meaningful metric. 

\begin{table}[H]
\centering
\begin{tabular}{ccccccccccc}
  \toprule 
  & & & \multicolumn{2}{c}{Case 1} & \multicolumn{2}{c}{Case 2}
  & \multicolumn{2}{c}{Case 3} & \multicolumn{2}{c}{Case 4} \\
    \cmidrule(lr){6-7}\cmidrule(lr){8-9}\cmidrule(lr){10-11} \cmidrule(lr){4-5}  
 $k_0$ & $\rho$ & $n$ & DD-SBM & c-SBM & DD-SBM & c-SBM & DD-SBM & c-SBM & DD-SBM & c-SBM \\ 
  \midrule
     \multirow{4}{*}{$3$}  
  & \multirow{2}{*}{$\frac{1}{2}$}
    & $50$ & 0.62 & 0.63 & 0.00 & 0.00 & 0.05 & 0.01 & 0.37 & 0.42 \\ 
  &
    &$75$ & 0.87 & 0.91 & 0.00 & 0.00 & 0.12 & 0.05 & 0.47 & 0.51 \\ 
  & \multirow{2}{*}{$1$}
    &$50$ & 0.97 & 0.98 & 0.01 & 0.01 & 0.86 & 0.86 & 0.56 & 0.58 \\ 
  &&
   $75$  &  0.99 & 0.99 & 0.03 & 0.03 & 0.96 & 0.97 & 0.58 & 0.57 \\ \cmidrule(lr){2-11}  
     \multirow{4}{*}{$5$}  
  & \multirow{2}{*}{$\frac{1}{2}$}
&$50$ & 0.10 & 0.03 & 0.00 & 0.00 & 0.01 & 0.00 & 0.20 & 0.22 \\ 
  && $75$& 0.25 & 0.11 & 0.00 & 0.00 & 0.01 & 0.00 & 0.28 & 0.30 \\ 
  & \multirow{2}{*}{$1$}
& $50$ &  0.83 & 0.86 & 0.00 & 0.00 & 0.06 & 0.03 & 0.33 & 0.34 \\ 
  &&
  $75$  &  0.94 & 0.99 & 0.00 & 0.00 & 0.32 & 0.17 & 0.35 & 0.36 \\ \cmidrule(lr){2-11}  
     \multirow{4}{*}{$7$}  
  &\multirow{2}{*}{$\frac{1}{2}$}
    &$50$ &  0.03 & 0.00 & 0.00 & 0.00 & 0.01 & 0.00 & 0.12 & 0.12 \\ 
  && $75$&  0.07 & 0.01 & 0.00 & 0.00 & 0.00 & 0.00 & 0.19 & 0.20 \\ 
  & \multirow{2}{*}{$1$}
    &$50$ &  0.24 & 0.12 & 0.00 & 0.00 & 0.01 & 0.00 & 0.24 & 0.24 \\ 
  &&
    $75$&  0.57 & 0.43 & 0.00 & 0.00 & 0.04 & 0.02 & 0.27 & 0.27 \\ 
   \bottomrule
\end{tabular}
\caption{Adjusted Rand index} 
\label{tab:compare-accuracy}
\end{table}

Given a synthetic network $A$ and draws from the posterior distribution $\Pi (\cdot| A)$,
we can compute the adjusted Rand index of 
posterior draws of $Z$ against $Z_0$ and use their mean as the accuracy metric
for $\Pi (\cdot| A)$.  
Like the adjusted Rand index for two clustering assignments, the averaged index assesses 
the agreement of the posterior distribution of $Z$ against the truth $Z_0$. 

Table \ref{tab:compare-accuracy} presents the average of adjusted Rand indices of  the 100 synthetic networks 
under different $(k_0,\rho,n)$ configurations in the four cases. 
Overall, the average adjusted Rand index of DD-SBM is similar to 
that of c-SBM.
This echoes the similar estimation accuracy of $k$ of DD-SBM and c-SBM,
as community detection is highly sensitive to the number of communities.  
When $\rho = 1/2$ and $k_0\in \{5,7\}$, DD-SBM is slightly better than c-SBM in Case 1 and 3. 
When data is less informative, the regularity in the prior of DD-SBM improves
estimation accuracy over c-SBM.
The advantage disappears in Case 2 and 4
where cross community connectivity is close to within community connectivity.  


\section{Sparse Networks}
\label{sec:sparse-networks}

The framework in Section \ref{sec:cons-est-k} can be extended to
sparse networks whose overall connectivity probability shrinks to 0 as network 
size increases \citep[e.g.][]{klopp2017oracle, gao2018minimax}. 
We state the posterior contraction rates and the posterior consistency
results for those sparse networks as follows.
Their proofs follow exactly the same argument 
except that the derivations involve the sparse factor $\rho_n$. 

\begin{theorem}
  \label{thm:posterior-contraction-sparse}
  Suppose adjacency matrix $A \in \{0,1\}^{n \times n}$ is generated from the SBM with 
  $\theta _n^0 = {\rho _n}T(Z_0 P^0Z_0^T)$, 
  $\log (k_0)/n \precsim \rho_n \precsim 1$, 
  $P^0 \in \Theta_{k_0,\delta_0}$ 
  for some $k_0 \precsim \sqrt{n} $
  and $\delta_0 > 0$, and the number of zero and one entries of $T(Z_0P^0Z_0^T)$ is
  at most $O(n^2\varepsilon_n)$ where
  $\varepsilon _n^2 \asymp \frac{\log(k_0)}{n}$.  
  The prior $\Pi_n$ satisfies Assumption \ref{assumption:prior}.
  Then, for all sufficiently large $M$,   
  \[{\bP_{0,n}}{\Pi _n}
    \left( {\theta: 
        ||\theta  - \theta _n^0||_\infty  \ge 
        M \varepsilon _n} | A \right) \to 0.\]
\end{theorem}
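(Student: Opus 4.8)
The plan is to re-run the Schwartz / Ghosal--van der Vaart program behind Theorem~\ref{thm:posterior-contraction}, preserving its three pillars---existence of exponential tests, control of the sieve entropy, and the Kullback--Leibler prior-mass condition---and to let the sparsity factor $\rho_n$ enter only where the Bernoulli likelihood genuinely deforms. First I would observe that two ingredients carry over verbatim. The Hellinger domination $||\theta^0-\theta^1||_\infty\le 2H(\bP_{\theta^0},\bP_{\theta^1})$ of Lemma~\ref{lemma:norm-dominance} is an unconditional fact about products of Bernoullis and is unaffected by the common rescaling $\theta=\rho_n T(ZPZ^T)$, so a sup-norm separation of order $\varepsilon_n$ still forces a Hellinger separation of the same order, and the tests of error $e^{-cn^2\varepsilon_n^2}$ required by \cite{ghosal2007convergence} still exist. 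Likewise Lemma~\ref{lemma:kid} and the block geometry survive the positive rescaling, so the rank-indexed pieces of the sparse parameter space stay disjoint.

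Next I would take the sieve to be the sparse analog $\bigcup_{k=1}^{K_n}\{\rho_n T(ZPZ^T):P\in S_{k,\delta_n}\}$ with $1\succ\delta_n\succsim\varepsilon_n^2$. Covering a family whose entries are $O(\rho_n)$ at sup-resolution $\varepsilon_n$ only improves the $\log(1/\varepsilon_n)$ term (it becomes $\log(\rho_n/\varepsilon_n)\le\log(1/\varepsilon_n)$ since $\rho_n\le1$), so the entropy bound of Lemma~\ref{lemma:entropy} still sits below $n^2\varepsilon_n^2$; the sieve-complement mass is discarded exactly as in the dense proof through the diagonal-dominance prior-mass bound, now at the $\rho_n$-scaled gap, contributing $e^{-n^2\delta_n}\precsim e^{-Cn^2\varepsilon_n^2}$.

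The place where $\rho_n$ actually bites---and the step I expect to be the main obstacle---is the prior-mass/KL balance. Because every nonzero, non-unit entry of $\theta_n^0=\rho_n T(Z_0P^0Z_0^T)$ is of order $\rho_n$, the curvature constant $C_0=\min_{i<j:0<\theta_{ij}^0<1}\theta_{ij}^0(1-\theta_{ij}^0)$ in Lemma~\ref{lemma:KL-dominance} is itself of order $\rho_n$; the Bernoulli KL divergence flattens by a factor $\rho_n^{-1}$, so a sup-ball of radius $r$ now only yields $KL\precsim\rho_n^{-1}n^2r^2$. To hold the KL (and $V_{2,0}$) radius at the target $n^2\varepsilon_n^2$ I would shrink the ball to radius $r_n\asymp\sqrt{\rho_n}\,\varepsilon_n$; on it the $O(n^2\varepsilon_n)$ vanishing entries of $\theta_n^0$ each contribute $KL(Ber(0)||Ber(\theta_{ij}))\asymp\theta_{ij}\le r_n$, for a total $\precsim n^2\varepsilon_n\cdot r_n=\sqrt{\rho_n}\,n^2\varepsilon_n^2\le n^2\varepsilon_n^2$, so both divergence bounds survive on this smaller ball.

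Finally I would redo the prior-mass lower bound of Lemma~\ref{lemma:prior-mass} on the ball of radius $r_n$. Integrating the density lower bounds of Assumption~\ref{assumption:prior} over the $\asymp k_0^2$ entries of $P$ and multiplying by the $Z$- and $K$-prior masses produces an exponent of order $k_0^2\log(1/r_n)+n\log k_0+k_0\log k_0$. The decisive point is that the hypothesis $\rho_n\succsim\log(k_0)/n=\varepsilon_n^2$ forces $r_n=\sqrt{\rho_n}\,\varepsilon_n\succsim\varepsilon_n^2\succsim 1/n$, so $\log(1/r_n)\precsim\log n$ and, using $k_0\precsim\sqrt n$, the whole exponent stays $\precsim n\log k_0=n^2\varepsilon_n^2$; thus the prior still charges $e^{-Cn^2\varepsilon_n^2}$ near the truth. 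In short, the admissible sparsity floor is precisely the condition that repays the $\rho_n^{-1}$ loss of KL curvature with enough prior mass, and once the entropy, test, and prior-mass estimates are in hand, the master theorem of \cite{ghosal2007convergence} delivers the stated $||\cdot||_\infty$-contraction at rate $M\varepsilon_n$.
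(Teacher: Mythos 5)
Your overall strategy is exactly the paper's: the paper proves this theorem by the single remark that the dense argument of Theorem \ref{thm:posterior-contraction} goes through with the factor $\rho_n$ tracked, and your proposal is a faithful elaboration of that, correctly isolating the one place where sparsity genuinely bites. The curvature constant $C_0$ of Lemma \ref{lemma:KL-dominance} drops to order $\rho_n$, the KL bound degrades to $\rho_n^{-1}n^2r^2$, and your repair---shrinking the prior-mass ball to radius $r_n \asymp \sqrt{\rho_n}\,\varepsilon_n$ and observing that the floor $\rho_n \succsim \log(k_0)/n = \varepsilon_n^2$ keeps $\log(1/r_n) \precsim \log n$, hence the prior-mass exponent at $\precsim n\log k_0 \asymp n^2\varepsilon_n^2$---is precisely the accounting that makes the paper's one-line claim true. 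Your treatment of the vanishing entries and of the unchanged ingredients (Lemma \ref{lemma:norm-dominance}, Lemma \ref{lemma:kid}) is also correct.

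There is, however, one step that fails as written: the sieve. You propose the $\rho_n$-scaled sieve $\bigcup_{k=1}^{K_n}\{\rho_n T(ZPZ^T): P \in S_{k,\delta_n}\}$ and claim its complement is discarded ``through the diagonal-dominance prior-mass bound.'' But the prior $\Pi_n$ does not involve $\rho_n$: under Assumption \ref{assumption:prior} the prior density of each diagonal entry is at least $e^{-C\log(n)P_{aa}} \ge n^{-C}$ throughout $(\delta,1)$, so the event that some entry of $P$ exceeds $\rho_n$---which is contained in the complement of your sieve---has prior mass at least of polynomial order in $1/n$, nowhere near the required $e^{-Cn^2\varepsilon_n^2} = e^{-Cn\log k_0}$; and Assumption \ref{assumption:DD} controls only $\Pi_n(S_{k,\delta_n}^c \mid K=k)$, not the mass of matrices with entries larger than $\rho_n$. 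The fix is simply to keep the dense sieve $\bigcup_{k\le K_n}\Theta_{k,\delta_n}$ unchanged: the Schwartz/GGV machinery never requires the truth to lie in the sieve (only the KL ball must carry mass), the entropy bound is Lemma \ref{lemma:entropy} verbatim, and Assumption \ref{assumption:DD} bounds the complement exactly as in the dense proof. One further point worth making explicit in your prior-mass step: the reduction to the $Z_0$ slice via Lemma \ref{lemma:ID-Z} must now be performed around the sparse truth, whose dominance gap is $\rho_n\delta_0$, so one needs $r_n = \sqrt{\rho_n}\,\varepsilon_n \precsim \rho_n\delta_0$, i.e.\ $\varepsilon_n^2 \precsim \rho_n\delta_0^2$---a second, independent place where the hypothesis $\rho_n \succsim \log(k_0)/n$ is used, and a consistency check on your choice of radius.
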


The posterior contraction rate in Theorem \ref{thm:posterior-contraction-sparse}
is independent of the sparsity level. 
In contrast, $L_2$ minimax rates of error derived in \cite{klopp2017oracle, gao2018minimax}
are proportional to the sparsity level.
We conjecture that $L_\infty$ minimax rates of error are also proportional to the sparsity level. 
It is likely that the posterior contraction rate in Theorem \ref{thm:posterior-contraction-sparse} 
is sub-optimal. 

\begin{theorem}
  \label{thm:main-sparse}
  Under the same assumptions of Theorem \ref{thm:posterior-contraction-sparse}
  except that the sparsity level satisfies $\log (k_0)/n \precsim \rho^2_n \precsim 1$,
  then 
  \[{\bP_{0,n}}\left[ {{\Pi _n} \left(
          \{K=k_0\} \cap \{Z = {Z_0}\} |A \right)} \right] \to 1. \] 
\end{theorem}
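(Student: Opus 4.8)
The plan is to mirror the proof of Theorem \ref{thm:main} exactly, substituting the sparse posterior contraction result of Theorem \ref{thm:posterior-contraction-sparse} for its dense counterpart and verifying that the identification argument of Lemma \ref{lemma:ID-Z} still applies at the sparse scale. First I would define $E_0 = \{K=k_0\} \cap \{Z=Z_0\}$ and decompose its complement as
\[
E^c_0
= \left( E^c_0 \cap \{||\theta - \theta_n^0||_\infty \le \varepsilon_n\} \right)
\cup
\left( E^c_0 \cap \{||\theta - \theta_n^0||_\infty > \varepsilon_n\} \right),
\]
so that the posterior mass of $E^c_0$ is bounded by the sum of the two corresponding posterior probabilities. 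The second term vanishes in $\bP_{0,n}$-probability directly by Theorem \ref{thm:posterior-contraction-sparse}, since the contraction holds at rate $\varepsilon_n$ with $\varepsilon_n^2 \asymp \log(k_0)/n$, so the only real work is showing the first term is identically zero for large $n$.

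For the first term I would invoke Lemma \ref{lemma:ID-Z}, but applied to the \emph{sparse} truth $\theta_n^0 = \rho_n T(Z_0 P^0 Z_0^T)$. The crucial observation is that the diagonal-dominance gap $\delta_0$ of $P^0$ gets scaled by $\rho_n$ in the node-wise matrix: the effective separation between within-community and between-community node-wise connectivity probabilities is now $\rho_n \delta_0$ rather than $\delta_0$. Lemma \ref{lemma:ID-Z} requires the sup-norm radius to be strictly less than half the dominance gap, i.e. $\varepsilon_n < \rho_n \delta_0 / 2$. This is precisely where the strengthened hypothesis $\log(k_0)/n \precsim \rho_n^2$ of Theorem \ref{thm:main-sparse} enters: it guarantees $\varepsilon_n \asymp \sqrt{\log(k_0)/n} \precsim \rho_n$, so that $\varepsilon_n / \rho_n \to 0$ and hence $\varepsilon_n < \rho_n \delta_0/2$ holds for all sufficiently large $n$. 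With this verified, the $\varepsilon_n$-ball around $\theta_n^0$ coincides with its $Z_0$-slice in which the implied community count is exactly $k_0$, forcing $E^c_0 \cap \{||\theta - \theta_n^0||_\infty \le \varepsilon_n\}$ to be empty.

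The main obstacle, and the reason the sparsity condition must be tightened from $\log(k_0)/n \precsim \rho_n$ (as in Theorem \ref{thm:posterior-contraction-sparse}) to $\log(k_0)/n \precsim \rho_n^2$, is exactly this gap-versus-radius comparison. The contraction rate $\varepsilon_n$ is independent of $\rho_n$, but the identification gap shrinks linearly in $\rho_n$; identification therefore survives only when the gap dominates the contraction radius, which is a genuinely stronger requirement than mere contraction. I would state this explicitly and note that the remainder of the argument---the union bound over the two events and the application of Theorem \ref{thm:posterior-contraction-sparse}---is formally identical to the proof of Theorem \ref{thm:main}, so no further calculation is needed.
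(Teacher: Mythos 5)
Your proposal is correct and takes essentially the same route as the paper: the paper proves Theorem \ref{thm:main-sparse} by repeating the proof of Theorem \ref{thm:main} verbatim with the sparse factor $\rho_n$ carried through, and its remark following the theorem identifies exactly the point you emphasize, namely that the node-wise dominance gap becomes $\rho_n\delta_0$ and Lemma \ref{lemma:ID-Z} then demands $\rho_n\delta_0 \succsim \varepsilon_n \asymp \sqrt{\log(k_0)/n}$, which is what forces the strengthened condition $\log(k_0)/n \precsim \rho_n^2$. The only caveat, shared by the paper itself, is that the step from $\varepsilon_n \precsim \rho_n$ to $\varepsilon_n < \rho_n\delta_0/2$ is loose about multiplicative constants (a $\precsim$ bound alone does not give $\varepsilon_n/\rho_n \to 0$), but this does not distinguish your argument from the paper's.
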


In the sparse network setting, the diagonal dominance gap also vanishes
at the rate of $\rho_n$. 
Our identification strategy for the number of communities
requires $\rho_n \delta_0 \succsim \varepsilon_n \asymp \sqrt{\log(k_0)/n}$ 
to guarantee consistent community detection. 
In contrast, some work in the sparse network literature 
works for networks with sparser sparsity levels   
\citep[e.g.][for a recent survey]{abbe2017community}. 
The Bayesian model outlined in (\ref{eq:proposed-prior}) may need 
additional modifications to adapt to networks at various sparse levels.

\section{Concluding Remarks}
\label{sec:discussions}

In this paper, we have shown Bayesian SBM can consistently estimate the
number of communities and the membership assignment.
Towards this end, we propose the diagonally dominant \ns '
prior and trade conjugacy of \ns' prior for simpler and clearer
asymptotic analysis. 

In the simulation studies, c-SBM has similar finite sample 
estimation accuracy to DD-SBM.  
We conjecture that c-SBM 
can also consistently estimate the
number of communities and the membership assignment
for networks generated from diagonally dominant SBM.
However, the proof technique adopted in this paper cannot be applied
to c-SBM. 

The price of losing conjugacy is on the computation side. 
The posterior sampler in \cite{geng2019probabilistic}  is much faster
than our allocation sampler as they successfully adapt the idea of MFM
sampler of \cite{miller2018mixture} to the SBM case.
It remains unclear if the MFM idea can be applied to the non-conjugate
case.

\bibliographystyle{imsart-nameyear}
\bibliography{SBM-consistency-bib.bib}

\begin{thebibliography}{29}

\bibitem[\protect\citeauthoryear{Abbe}{2017}]{abbe2017community}
\begin{barticle}[author]
\bauthor{\bsnm{Abbe},~\bfnm{Emmanuel}\binits{E.}}
(\byear{2017}).
\btitle{Community detection and stochastic block models: recent developments}.
\bjournal{The Journal of Machine Learning Research}
\bvolume{18}
\bpages{6446--6531}.
\end{barticle}
\endbibitem

\bibitem[\protect\citeauthoryear{Airoldi et~al.}{2008}]{airoldi2008mixed}
\begin{barticle}[author]
\bauthor{\bsnm{Airoldi},~\bfnm{Edoardo~M}\binits{E.~M.}},
  \bauthor{\bsnm{Blei},~\bfnm{David~M}\binits{D.~M.}},
  \bauthor{\bsnm{Fienberg},~\bfnm{Stephen~E}\binits{S.~E.}} \AND
  \bauthor{\bsnm{Xing},~\bfnm{Eric~P}\binits{E.~P.}}
(\byear{2008}).
\btitle{Mixed membership stochastic blockmodels}.
\bjournal{Journal of machine learning research}
\bvolume{9}
\bpages{1981--2014}.
\end{barticle}
\endbibitem

\bibitem[\protect\citeauthoryear{Barron, Schervish and
  Wasserman}{1999}]{barron1999consistency}
\begin{barticle}[author]
\bauthor{\bsnm{Barron},~\bfnm{Andrew}\binits{A.}},
  \bauthor{\bsnm{Schervish},~\bfnm{Mark~J}\binits{M.~J.}} \AND
  \bauthor{\bsnm{Wasserman},~\bfnm{Larry}\binits{L.}}
(\byear{1999}).
\btitle{The consistency of posterior distributions in nonparametric problems}.
\bjournal{The Annals of Statistics}
\bvolume{27}
\bpages{536--561}.
\end{barticle}
\endbibitem

\bibitem[\protect\citeauthoryear{Bickel and
  Sarkar}{2016}]{bickel2016hypothesis}
\begin{barticle}[author]
\bauthor{\bsnm{Bickel},~\bfnm{Peter~J}\binits{P.~J.}} \AND
  \bauthor{\bsnm{Sarkar},~\bfnm{Purnamrita}\binits{P.}}
(\byear{2016}).
\btitle{Hypothesis testing for automated community detection in networks}.
\bjournal{Journal of the Royal Statistical Society: Series B (Statistical
  Methodology)}
\bvolume{78}
\bpages{253--273}.
\end{barticle}
\endbibitem

\bibitem[\protect\citeauthoryear{Chen and Lei}{2018}]{chen2018network}
\begin{barticle}[author]
\bauthor{\bsnm{Chen},~\bfnm{Kehui}\binits{K.}} \AND
  \bauthor{\bsnm{Lei},~\bfnm{Jing}\binits{J.}}
(\byear{2018}).
\btitle{Network cross-validation for determining the number of communities in
  network data}.
\bjournal{Journal of the American Statistical Association}
\bvolume{113}
\bpages{241--251}.
\end{barticle}
\endbibitem

\bibitem[\protect\citeauthoryear{Gao, Lu and Zhou}{2015}]{gao2015rate}
\begin{barticle}[author]
\bauthor{\bsnm{Gao},~\bfnm{Chao}\binits{C.}},
  \bauthor{\bsnm{Lu},~\bfnm{Yu}\binits{Y.}} \AND
  \bauthor{\bsnm{Zhou},~\bfnm{Harrison~H}\binits{H.~H.}}
(\byear{2015}).
\btitle{Rate-optimal graphon estimation}.
\bjournal{The Annals of Statistics}
\bvolume{43}
\bpages{2624--2652}.
\end{barticle}
\endbibitem

\bibitem[\protect\citeauthoryear{Gao and Ma}{2018}]{gao2018minimax}
\begin{barticle}[author]
\bauthor{\bsnm{Gao},~\bfnm{Chao}\binits{C.}} \AND
  \bauthor{\bsnm{Ma},~\bfnm{Zongming}\binits{Z.}}
(\byear{2018}).
\btitle{Minimax rates in network analysis: Graphon estimation, community
  detection and hypothesis testing}.
\bjournal{arXiv preprint arXiv:1811.06055}.
\end{barticle}
\endbibitem

\bibitem[\protect\citeauthoryear{Geng, Bhattacharya and
  Pati}{2019}]{geng2019probabilistic}
\begin{barticle}[author]
\bauthor{\bsnm{Geng},~\bfnm{Junxian}\binits{J.}},
  \bauthor{\bsnm{Bhattacharya},~\bfnm{Anirban}\binits{A.}} \AND
  \bauthor{\bsnm{Pati},~\bfnm{Debdeep}\binits{D.}}
(\byear{2019}).
\btitle{Probabilistic community detection with unknown number of communities}.
\bjournal{Journal of the American Statistical Association}
\bvolume{114}
\bpages{893--905}.
\end{barticle}
\endbibitem

\bibitem[\protect\citeauthoryear{Ghosal, Ghosh and
  Ramamoorthi}{1999}]{ghosal1999posterior}
\begin{barticle}[author]
\bauthor{\bsnm{Ghosal},~\bfnm{Subhashis}\binits{S.}},
  \bauthor{\bsnm{Ghosh},~\bfnm{Jayanta~K}\binits{J.~K.}} \AND
  \bauthor{\bsnm{Ramamoorthi},~\bfnm{RV}\binits{R.}}
(\byear{1999}).
\btitle{Posterior consistency of {Dirichlet} mixtures in density estimation}.
\bjournal{Ann. Statist}
\bvolume{27}
\bpages{143--158}.
\end{barticle}
\endbibitem

\bibitem[\protect\citeauthoryear{Ghosal, Ghosh and van~der
  Vaart}{2000}]{ghosal2000convergence}
\begin{barticle}[author]
\bauthor{\bsnm{Ghosal},~\bfnm{Subhashis}\binits{S.}},
  \bauthor{\bsnm{Ghosh},~\bfnm{Jayanta~K}\binits{J.~K.}} \AND
  \bauthor{\bparticle{van~der} \bsnm{Vaart},~\bfnm{Aad~W}\binits{A.~W.}}
(\byear{2000}).
\btitle{Convergence rates of posterior distributions}.
\bjournal{Annals of Statistics}
\bvolume{28}
\bpages{500--531}.
\end{barticle}
\endbibitem

\bibitem[\protect\citeauthoryear{Ghosal and van~der
  Vaart}{2007}]{ghosal2007convergence}
\begin{barticle}[author]
\bauthor{\bsnm{Ghosal},~\bfnm{Subhashis}\binits{S.}} \AND
  \bauthor{\bparticle{van~der} \bsnm{Vaart},~\bfnm{Aad}\binits{A.}}
(\byear{2007}).
\btitle{Convergence rates of posterior distributions for noniid observations}.
\bjournal{The Annals of Statistics}
\bvolume{35}
\bpages{192--223}.
\end{barticle}
\endbibitem

\bibitem[\protect\citeauthoryear{Ghosal and van~der
  Vaart}{2017}]{ghosal2017fundamentals}
\begin{bbook}[author]
\bauthor{\bsnm{Ghosal},~\bfnm{Subhashis}\binits{S.}} \AND
  \bauthor{\bparticle{van~der} \bsnm{Vaart},~\bfnm{Aad}\binits{A.}}
(\byear{2017}).
\btitle{Fundamentals of nonparametric Bayesian inference}
\bvolume{44}.
\bpublisher{Cambridge University Press}.
\end{bbook}
\endbibitem

\bibitem[\protect\citeauthoryear{Ghosh, Pati and
  Bhattacharya}{2019}]{ghosh2019posterior}
\begin{barticle}[author]
\bauthor{\bsnm{Ghosh},~\bfnm{Prasenjit}\binits{P.}},
  \bauthor{\bsnm{Pati},~\bfnm{Debdeep}\binits{D.}} \AND
  \bauthor{\bsnm{Bhattacharya},~\bfnm{Anirban}\binits{A.}}
(\byear{2019}).
\btitle{Posterior Contraction Rates for Stochastic Block Models}.
\bjournal{Sankhya A}
\bpages{1--29}.
\end{barticle}
\endbibitem

\bibitem[\protect\citeauthoryear{Hubert and Arabie}{1985}]{hubert1985comparing}
\begin{barticle}[author]
\bauthor{\bsnm{Hubert},~\bfnm{Lawrence}\binits{L.}} \AND
  \bauthor{\bsnm{Arabie},~\bfnm{Phipps}\binits{P.}}
(\byear{1985}).
\btitle{Comparing partitions}.
\bjournal{Journal of classification}
\bvolume{2}
\bpages{193--218}.
\end{barticle}
\endbibitem

\bibitem[\protect\citeauthoryear{Karrer and
  Newman}{2011}]{karrer2011stochastic}
\begin{barticle}[author]
\bauthor{\bsnm{Karrer},~\bfnm{Brian}\binits{B.}} \AND
  \bauthor{\bsnm{Newman},~\bfnm{Mark~EJ}\binits{M.~E.}}
(\byear{2011}).
\btitle{Stochastic blockmodels and community structure in networks}.
\bjournal{Physical review E}
\bvolume{83}
\bpages{016107}.
\end{barticle}
\endbibitem

\bibitem[\protect\citeauthoryear{Klopp, Tsybakov and
  Verzelen}{2017}]{klopp2017oracle}
\begin{barticle}[author]
\bauthor{\bsnm{Klopp},~\bfnm{Olga}\binits{O.}},
  \bauthor{\bsnm{Tsybakov},~\bfnm{Alexandre~B}\binits{A.~B.}} \AND
  \bauthor{\bsnm{Verzelen},~\bfnm{Nicolas}\binits{N.}}
(\byear{2017}).
\btitle{Oracle inequalities for network models and sparse graphon estimation}.
\bjournal{The Annals of Statistics}
\bvolume{45}
\bpages{316--354}.
\end{barticle}
\endbibitem

\bibitem[\protect\citeauthoryear{Lei}{2016}]{lei2016goodness}
\begin{barticle}[author]
\bauthor{\bsnm{Lei},~\bfnm{Jing}\binits{J.}}
(\byear{2016}).
\btitle{A goodness-of-fit test for stochastic block models}.
\bjournal{The Annals of Statistics}
\bvolume{44}
\bpages{401--424}.
\end{barticle}
\endbibitem

\bibitem[\protect\citeauthoryear{Li, Levina and Zhu}{2020}]{li2020network}
\begin{barticle}[author]
\bauthor{\bsnm{Li},~\bfnm{Tianxi}\binits{T.}},
  \bauthor{\bsnm{Levina},~\bfnm{Elizaveta}\binits{E.}} \AND
  \bauthor{\bsnm{Zhu},~\bfnm{Ji}\binits{J.}}
(\byear{2020}).
\btitle{Network cross-validation by edge sampling}.
\bjournal{Biometrika}
\bvolume{107}
\bpages{257--276}.
\end{barticle}
\endbibitem

\bibitem[\protect\citeauthoryear{McDaid et~al.}{2013}]{mcdaid2013improved}
\begin{barticle}[author]
\bauthor{\bsnm{McDaid},~\bfnm{Aaron~F}\binits{A.~F.}},
  \bauthor{\bsnm{Murphy},~\bfnm{Thomas~Brendan}\binits{T.~B.}},
  \bauthor{\bsnm{Friel},~\bfnm{Nial}\binits{N.}} \AND
  \bauthor{\bsnm{Hurley},~\bfnm{Neil~J}\binits{N.~J.}}
(\byear{2013}).
\btitle{Improved {Bayesian} inference for the stochastic block model with
  application to large networks}.
\bjournal{Computational Statistics \& Data Analysis}
\bvolume{60}
\bpages{12--31}.
\end{barticle}
\endbibitem

\bibitem[\protect\citeauthoryear{Miller and Harrison}{2018}]{miller2018mixture}
\begin{barticle}[author]
\bauthor{\bsnm{Miller},~\bfnm{Jeffrey~W}\binits{J.~W.}} \AND
  \bauthor{\bsnm{Harrison},~\bfnm{Matthew~T}\binits{M.~T.}}
(\byear{2018}).
\btitle{Mixture models with a prior on the number of components}.
\bjournal{Journal of the American Statistical Association}
\bvolume{113}
\bpages{340--356}.
\end{barticle}
\endbibitem

\bibitem[\protect\citeauthoryear{Nobile and
  Fearnside}{2007}]{nobile2007bayesian}
\begin{barticle}[author]
\bauthor{\bsnm{Nobile},~\bfnm{Agostino}\binits{A.}} \AND
  \bauthor{\bsnm{Fearnside},~\bfnm{Alastair~T}\binits{A.~T.}}
(\byear{2007}).
\btitle{Bayesian finite mixtures with an unknown number of components: The
  allocation sampler}.
\bjournal{Statistics and Computing}
\bvolume{17}
\bpages{147--162}.
\end{barticle}
\endbibitem

\bibitem[\protect\citeauthoryear{Nowicki and
  Snijders}{2001}]{nowicki2001estimation}
\begin{barticle}[author]
\bauthor{\bsnm{Nowicki},~\bfnm{Krzysztof}\binits{K.}} \AND
  \bauthor{\bsnm{Snijders},~\bfnm{Tom A~B}\binits{T.~A.~B.}}
(\byear{2001}).
\btitle{Estimation and prediction for stochastic blockstructures}.
\bjournal{Journal of the American Statistical Association}
\bvolume{96}
\bpages{1077--1087}.
\end{barticle}
\endbibitem

\bibitem[\protect\citeauthoryear{Peixoto}{2014}]{peixoto2014hierarchical}
\begin{barticle}[author]
\bauthor{\bsnm{Peixoto},~\bfnm{Tiago~P}\binits{T.~P.}}
(\byear{2014}).
\btitle{Hierarchical block structures and high-resolution model selection in
  large networks}.
\bjournal{Physical Review X}
\bvolume{4}
\bpages{011047}.
\end{barticle}
\endbibitem

\bibitem[\protect\citeauthoryear{Rand}{1971}]{rand1971objective}
\begin{barticle}[author]
\bauthor{\bsnm{Rand},~\bfnm{William~M}\binits{W.~M.}}
(\byear{1971}).
\btitle{Objective criteria for the evaluation of clustering methods}.
\bjournal{Journal of the American Statistical association}
\bvolume{66}
\bpages{846--850}.
\end{barticle}
\endbibitem

\bibitem[\protect\citeauthoryear{Saldana, Yu and Feng}{2017}]{saldana2017many}
\begin{barticle}[author]
\bauthor{\bsnm{Saldana},~\bfnm{D~Franco}\binits{D.~F.}},
  \bauthor{\bsnm{Yu},~\bfnm{Yi}\binits{Y.}} \AND
  \bauthor{\bsnm{Feng},~\bfnm{Yang}\binits{Y.}}
(\byear{2017}).
\btitle{How many communities are there?}
\bjournal{Journal of Computational and Graphical Statistics}
\bvolume{26}
\bpages{171--181}.
\end{barticle}
\endbibitem

\bibitem[\protect\citeauthoryear{Schwartz}{1965}]{schwartz1965bayes}
\begin{barticle}[author]
\bauthor{\bsnm{Schwartz},~\bfnm{Lorraine}\binits{L.}}
(\byear{1965}).
\btitle{On {Bayes} procedures}.
\bjournal{Probability Theory and Related Fields}
\bvolume{4}
\bpages{10--26}.
\end{barticle}
\endbibitem

\bibitem[\protect\citeauthoryear{van~der Pas and van~der
  Vaart}{2018}]{van2018bayesian}
\begin{barticle}[author]
\bauthor{\bparticle{van~der} \bsnm{Pas},~\bfnm{SL}\binits{S.}} \AND
  \bauthor{\bparticle{van~der} \bsnm{Vaart},~\bfnm{AW}\binits{A.}}
(\byear{2018}).
\btitle{Bayesian community detection}.
\bjournal{Bayesian Analysis}
\bvolume{13}
\bpages{767--796}.
\end{barticle}
\endbibitem

\bibitem[\protect\citeauthoryear{Wang and Bickel}{2017}]{wang2017likelihood}
\begin{barticle}[author]
\bauthor{\bsnm{Wang},~\bfnm{YX~Rachel}\binits{Y.~R.}} \AND
  \bauthor{\bsnm{Bickel},~\bfnm{Peter~J}\binits{P.~J.}}
(\byear{2017}).
\btitle{Likelihood-based model selection for stochastic block models}.
\bjournal{The Annals of Statistics}
\bvolume{45}
\bpages{500--528}.
\end{barticle}
\endbibitem

\bibitem[\protect\citeauthoryear{Zhao, Levina and
  Zhu}{2011}]{zhao2011community}
\begin{barticle}[author]
\bauthor{\bsnm{Zhao},~\bfnm{Yunpeng}\binits{Y.}},
  \bauthor{\bsnm{Levina},~\bfnm{Elizaveta}\binits{E.}} \AND
  \bauthor{\bsnm{Zhu},~\bfnm{Ji}\binits{J.}}
(\byear{2011}).
\btitle{Community extraction for social networks}.
\bjournal{Proceedings of the National Academy of Sciences}
\bvolume{108}
\bpages{7321--7326}.
\end{barticle}
\endbibitem

\end{thebibliography}

\begin{supplement}
\textbf{Supplement to ``Consistent Bayesian Community Detection''}.
This Supplement contains additional results and proofs in the text. 
\end{supplement}

\newpage

\begin{center}
{\bf \large Supplement to ``Consistent Bayesian Community Detection''}  
\end{center}

The supplement file contains complete proofs for
Lemma \ref{lemma:kid},
\ref{lemma:norm-dominance},
\ref{lemma:entropy},
\ref{lemma:KL-dominance} and
\ref{lemma:prior-mass},
details of the sampler,
and complete simulation results for all configurations. 

\section{Proofs}
\label{sec:proofs}


\paragraph{Proof of Lemma \ref{lemma:kid}}
\begin{proof}
  Suppose $\theta \in \Theta_{k,\delta}$, 
  it suffices to show $\theta \notin \Theta_{k^\prime, \delta^\prime}$
  for all $k^\prime<k$ and $\delta^\prime \ge 0$.
  Now prove the statement by contradiction.

  If $\theta \in \Theta_{k^\prime, \delta^\prime}$ for some
  $k^\prime<k$ and $\delta^\prime \ge 0$, then 
  some nodes from some communities implied by $\theta$ are merged. 
  But by construction of $\Theta_{k,\delta}$, between-community
  connectivity probabilities of $\theta$
  are strictly less than corresponding within community connectivity probabilities.
  Therefore, once merged, the connectivity probabilities of the merged
  block are not identical. This is a contradiction.  
  
\end{proof}

\paragraph{Proof of Lemma \ref{lemma:norm-dominance}}
\begin{proof}
  The Hellinger distance between two Bernoulli random variables satisfies  
  \[\begin{array}{lll}
      {H^2}\left( {{\bP_{\theta _{ij}^0}},{\bP_{\theta _{ij}^1}}} \right)
      &=& \frac{1}{2}\left[ {{{\left( {\sqrt {\theta _{ij}^0}
          - \sqrt {\theta _{ij}^1} } \right)}^2}
          + {{\left( {\sqrt {1 - \theta _{ij}^0}
          - \sqrt {1 - \theta _{ij}^1} } \right)}^2}} \right]\\
      &= &
           \frac{1}{2}\left[ {{{\left( {\frac{1}{2}2|\sqrt {\theta _{ij}^0}
           - \sqrt {\theta _{ij}^1} |} \right)}^2}
           + {{\left( {\frac{1}{2}2|\sqrt {1 - \theta _{ij}^0}
           - \sqrt {1 - \theta _{ij}^1} |} \right)}^2}} \right]\\
      &\ge&
            \frac{1}{4}{\left( {\theta _{ij}^0 - \theta _{ij}^1} \right)^2}
    \end{array}\]
  as $\theta_{ij}^0$ and $\theta_{ij}^1$ are in $[0,1]$,
  $|\sqrt {\theta _{ij}^0} + \sqrt {\theta _{ij}^1} | \le 2 $ and
  $|\sqrt {1-\theta _{ij}^0} + \sqrt {1-\theta _{ij}^1} | \le 2 $.
  
  By independence, ${P_\theta } = { \otimes _{i < j}}{P_{{\theta _{ij}}}}$. 
  Then, the Hellinger distance between $\bP_{\theta^0}$ and $\bP_{\theta^1}$
  satisfies 
  \[\begin{array}{lll}
      {H^2}\left( {{P_{{\theta ^0}}},{P_{{\theta ^1}}}} \right)
      &=&
          2 - 2\prod\nolimits_{i < j}^{} {\left( {1 - \frac{1}{2}
          {H^2}\left( {{P_{\theta _{ij}^0}},{P_{\theta _{ij}^1}}} \right)} \right)} \\
      &\ge &
             2 - 2\prod\nolimits_{i < j}^{} {\left( {1 - 
             \frac{1}{8}{\left( {\theta _{ij}^0 - \theta _{ij}^1} \right)^2}
             } \right)} \\
      &\ge &
             2 - 2\mathop {\min }\nolimits_{i < j}
             \left( {1 -
             \frac{1}{8}{\left( {\theta _{ij}^0 - \theta _{ij}^1} \right)^2}
             } \right)\\
      &=&\frac{1}{4}
          \mathop {\max }\nolimits_{i < j}
          \left( {\theta _{ij}^0 - \theta _{ij}^1} \right)^2      \\
      &=&
            \frac{1}{4}||{\theta ^0} - {\theta ^1}||_\infty ^2. 
    \end{array}\]
\end{proof}

 \paragraph{Proof of Lemma \ref{lemma:entropy}}
\begin{proof}

  Note ${\Theta _{k,\delta_n }} =
  { \cup _{Z \in {\cZ_{n,k}}}} \Theta _{k,\delta_n }^Z$,  
  where $\Theta _{k,\delta_n }^Z
  = \left\{ T(ZPZ^T): P \in {S_{k,\delta_n }} \right\}$ denotes the $Z$
  slice of the parameter space.

  By Lemma \ref{lemma:ID-Z} and the assumption on $\delta_n$ and
  $\varepsilon_n$, node-wise connectivity probability matrix space can
  be simplified via 
  $\left\{ {\theta :||\theta  - {\theta ^0}|{|_\infty } < {\varepsilon _n}} \right\}
  = \left\{ T(Z_0PZ_0^T):||P - {P^0}|{|_\infty } < {\varepsilon _n}
  \right\}$.  
  This relation implies the covering number 
  $N\left( {{\varepsilon _n},\Theta _{k,\delta_n }^Z,
      || \cdot |{|_\infty }} \right)
  \le {\left( {1/{\varepsilon _n}} \right)^{k\left( {k + 1} \right)/2}}$, 
  and then union bound implies the covering number 
  $N\left( {{\varepsilon _n},\Theta _{k,\delta_n }^{},
      || \cdot |{|_\infty }} \right)
  \le
  {k^n}{\left( {1/{\varepsilon _n}} \right)^{k\left( {k + 1}
      \right)/2}}$.
  
  By Lemma \ref{lemma:kid}, $\Theta _{k,\delta }$ are non-overlapping
  for different $k$, then another union bound implies the statement (\ref{eq:entropy}).

\end{proof}
 
\paragraph{Proof of Lemma \ref{lemma:KL-dominance}}

\begin{proof}
  First recall some basic expansions from calculus. For $x_0\in (0,1)$, define
  $f\left( x \right)
  =  - {x_0}\log \frac{x}{{x{ _0}}}
  - \left( {1 - {x_0}} \right)\log \frac{{1 - x}}{{1 - {x_0}}}$ for $x \in [0,1]$. 
  Taylor expand $f(x)$ around $x_0$:
  \[\begin{array}{lll}
      f\left( x \right)
      &=&
          f\left( {{x_0}} \right)
          + {f^\prime}\left( {{x_0}} \right)\left( {x - {x_0}} \right)
          + \frac{1}{2}{f^{\prime \prime}}\left( {{x_0}} \right){\left( {x - {x_0}} \right)^2}
          + O\left( {|x - {x_0}{|^3}} \right)\\
      &=& 
          \frac{1}{{2{x_0}\left( {1 - {x_0}} \right)}}{\left( {x - {x_0}} \right)^2}
          + O\left( {|x - {x_0}{|^3}} \right). 
    \end{array}\]
  For $x_0=0$, the above $f(x) = -\log(1-x)$ with the convention $0 \log 0 =0$.
  Its Taylor expansion around $0$ is $f\left( x \right) =  - \log (1 - x) = x + O\left( {{x^2}} \right)$. 
  For $x_0=1$, the above $f(x) = -\log(x)$ also with the convention $0 \log 0 =0$. 
  Its Taylor expansion around $1$ is
  $f\left( x \right) =  - \log (x) = 1 - x + O\left( {{{\left( {1 - x} \right)}^2}} \right)$. 

  With $||\theta-  \theta^0||_{\infty} \le \varepsilon_n $ and
  the assumption on $\theta^0$, 
  expand KL divergence at $\theta^0$, 
  \[\begin{array}{lll} 
      KL\left( {{\bP_{{ \theta ^0}}},{\bP_\theta }} \right)
      &=&
          - \sum\nolimits_{i < j:\theta _{ij}^0 > 0}
          {\theta _{ij}^0\log \frac{{{\theta _{ij}}}}{{ \theta _{ij}^0}}}
          - \sum\nolimits_{i < j:\theta _{ij}^0 < 1} 
          {\left( {1 - \theta _{ij}^0} \right)
          \log \frac{{1 - {\theta _{ij}}}}{{1 - \theta _{ij}^0}}} \\
      &\le&
            \left( {{N_0} + {N_1}} \right)\left( {{\varepsilon _n}
            + O\left( {\varepsilon _n^2 } \right)} \right)
            + \frac{{n\left( {n - 1} \right)}}{2 }C_0^{ - 1}
            \left( {\varepsilon _n^2
            + O\left( {|{\varepsilon _n}{|^3}} \right)} \right)\\
      &\precsim&
                 {n^2}\varepsilon _n^2/{C_0}
    \end{array}\]
  where ${N_0} = \# \left\{ {\left( {i,j} \right):
      \theta _{ij}^0 = 0,i < j} \right\}$ denotes the number of zero
  entries in $\theta^0$,  and 
  ${N_1} = \# \left\{ {\left( {i,j} \right):
      \theta _{ij}^0 = 1,i < j} \right\}$ denotes the number of one
  entries in $\theta^0$.

  To bound $V_{2,0}$, note the Taylor expansion of $f\left( x \right) = \log \frac{x}{{1 - x}}$
  around $x_0 \in (0,1)$ satisfies
  $f\left( x \right)
  = \log \frac{x}{{1 - x}} = \log \frac{{{x_0}}}{{1 - {x_0}}}
  + \frac{1}{{{x_0}\left( {1 - {x_0}} \right)}}\left( {x - {x_0}} \right) 
  + O\left( {{{\left( {x - {x_0}} \right)}^2}} \right)$.
  
  By independence of different entries
  and with $||\theta-  \theta^0||_{\infty}
  \le \varepsilon_n$, KL variation can be  
  bounded similarly by an expansion of $f(x)=\log(x/(1-x))$: 
  \[\begin{array}{lll} 
      {V_{2,0}}\left( {{\bP_{{ \theta ^0}}},{\bP_\theta }} \right)
      &=&
          {\bP_0}\left\{ {{{\left[ {  \sum\limits_{i < j}^{}
          {\left( {{A_{ij}}\log \frac{{{\theta _{ij}}}}{{\theta _{ij}^0}}
          + \left( {1 - {A_{ij}}} \right)
          \log \frac{{1 - {\theta _{ij}}}}
          {{1 - \theta _{ij}^0}}} \right)}
          + KL\left( {{\bP_{{ \theta ^0}}},{\bP_\theta }} \right)} \right]}^2}} \right\}\\
      &=&
          \sum\limits_{i < j}^{} {{\bP_0}\left\{ {{{\left[
          {\left( {{A_{ij}}\log \frac{{{\theta _{ij}}}}{{\theta _{ij}^0}}
          + \left( {1 - {A_{ij}}} \right)
          \log \frac{{1 - {\theta _{ij}}}}{{1 - \theta _{ij}^0}}} \right)
          + KL\left( {{\bP_{\theta _{ij}^0}},{\bP_{{\theta _{ij}}}}} \right)} \right]}^2}} \right\}} \\
      &=& 
          \sum\nolimits_{i < j}^{}
          {\theta _{ij}^0\left( {1 -  \theta _{ij}^0} \right)
          {{\left( {\log \frac{{{\theta _{ij}}}}{{1 - {\theta _{ij}}}} 
          - \log \frac{{\theta _{ij}^0}}{{1 - \theta _{ij}^0}}} \right)}^2}} \\
      &\precsim&
                 \sum\nolimits_{i < j}^{}
                 \frac{1}
                 {\theta _{ij}^0\left( {1 - \theta _{ij}^0} \right)}
                 \varepsilon _n^2
      \\
      &\precsim&
                 {n^2}\varepsilon _n^2/{C_0}
    \end{array}\]

\end{proof}

\paragraph{Proof of Lemma \ref{lemma:prior-mass}}
\begin{proof}

  By the dependence assumption made in Assumption
  \ref{assumption:prior},
  the prior mass has the factorization
  \begin{equation}
    \label{eq:prior-mass-fac}
    {\Pi _n}\left( {P:
        ||P -  {P^0}|{|_\infty } < C_0{\varepsilon _n} }|K=k_0 \right)
    {\Pi _n}\left( {Z = {Z_0}|K = {k_0}} \right)
    {\Pi _n}\left( {K = {k_0}} \right).  
  \end{equation}
  Next, we bound individual components of (\ref{eq:prior-mass-fac}) respectively.
  
To bound the first component of (\ref{eq:prior-mass-fac}), 
the conditional indepence of the off-diagonal entries of $P$ on the diagonal entries of $P$ suggests the 
following factorization,  
\[\begin{array}{ll}
    &  {\Pi _n}\left( {P:
      ||P -  {P^0}|{|_\infty } < C_0 {\varepsilon _n} }
      |K=k_0 \right) \\ 
    =& {\Pi _n}\left( {
        \bigcap _{1\le a \le b \le k_0} E_{n,ab}}|K=k_0 \right) \\
    = & \prod_{1 \le a \le k_0} \left\{ 
        \int_{E_{n,aa}} \left[
        \prod \nolimits_{1\le a < b \le k_0} 
        {\Pi _n}\left(E_{n,ab}|\{P_{aa}\}, K=k_0\right) \right]
       d \Pi_n(P_{aa}|K=k_0 ) \right\} 
  \end{array}\]
where $E_{n,ab}= \{ P_{ab}: |P_{ab} -  {P_{ab}^0}| < C_0 {\varepsilon _n} \}$.
As ${\varepsilon _n}  =o(1)$ and $P^0\in S_{k_0,\delta_0}$,
(conditional) prior density of 
$P_{ab}$ is positive on $E_{n,ab}$ for all $a,b \in [k_0]$. 

By Assumption  \ref{assumption:prior} (2), for $a<b \in [k_0]$, the prior probability 
$\Pi_n(E_{n,ab}|\{P_{aa}\},K=k_0)
\ge |E_{n,ab}| \min\limits_{P_{ab} \in E_{n,ab}} \pi_n(P_{ab}|\{P_{aa}\},K=k_0,\delta)
\succsim \varepsilon _n e^{-C\log(n) (P_{aa}\wedge P_{bb}) } $
for some universal constant $C$.  
As $P_{aa}\in E_{n,aa}$ for $a \in [k_0]$,   $P_{aa}\wedge P_{bb} 
\le (P^0_{aa}\wedge P^0_{bb}) + C_0 \varepsilon _n 
\le ||P^0||_{\infty} +  C_0\varepsilon _n$, which
gives a bound independent of $\{P_{aa}\}$.   

Similarly, Assumption  \ref{assumption:prior} (2)  implies 
$\Pi_n(E_{n,aa}|K=k_0)
\succsim \varepsilon _n
e^{-C\log(n) ( P^0_{aa} + C_0\varepsilon _n) }$.  

Therefore, combining the bounds for $P_{ab}$'s gives  
\[{\Pi _n}\left( {P:
    ||P -  {P^0}|{|_\infty } < C_0{\varepsilon _n} }|K=k_0 \right) 
\succsim e^{C k_0^2 \log (\varepsilon _n)
  -C k_0^2 \log(n) (||P^0||_{\infty}+ C_0 \varepsilon _n) } \]
where  $k_0^2$ has the same order as  $\frac{1}{2}k_0(k_0+1)$ and is used for simpler notation, and the constant $C$ is universal.

As $\varepsilon_n^2 \asymp \log(k_0)/n $ and $1 \succsim \log(k_0)/n $,
$\log(n) \succsim -\log(\varepsilon_n)$. 
As $k_0 \precsim \sqrt{n}$, $k_0^2 \log(n) \precsim n \log(k_0)$.
Then,  ${\Pi _n}\left( {P: 
    ||P -  {P^0}|{|_\infty } < C_0{\varepsilon _n} }|K=k_0 \right)
\succsim
e^{-C n\log(k_0)}$ 
for some constant $C$ dependent on $P^0$.

To bound the second and the third component of (\ref{eq:prior-mass-fac}),
by Assumption \ref{assumption:prior} (3) and (4), 
there exists a universal constant $C$ such that 
${\Pi _n}\left( {Z = {Z_0}|K = {k_0}} \right) \ge e^{-C n \log(k_0)}$ and
${\Pi _n}\left( {K = {k_0}} \right) \ge e^{-C n \log(k_0)}$. 

Note $n^2 \varepsilon_n^2 \asymp  n \log(k_0)$, 
the right hand side of the inequality (\ref{eq:prior-mass}) can be
replaced with $e^{-Cn\log(k_0)}$ and (\ref{eq:prior-mass}) holds for
some constant $C$ dependent on $P^0$.

\end{proof}

\section{Posterior Sampler}
\label{sec:posterior-sampler}

This section presents details of the Metropolis-Hastings algorithm
used to draw posterior samples from (\ref{eq:posterior}). 
The proposal has two stages: in the first stage,
sample $(Z,K)$; in the second stage, sample $P$ given $(Z,K)$.
The first stage is adapted from the allocation sampler \citep{mcdaid2013improved}. 

At $t^{th}$ iteration, 
the proposal
$\Pi_{prop}\left( Z^{*},K^{*}|A, Z^{(t)},K^{(t)},P^{(t)}\right) $ consists of 
the four steps MK, GS, M3 and AE with equal probability $\frac{1}{4}$.
With proposal $(Z^*,K^*)$, sample $P^*|(Z^*,A)$ by independently sampling 
each entry of $P^*$ from the  Beta distribution
$ Beta\left( {{O_{ab}^*} + 1, {n_{ab}^*} - {O_{ab}^*} + 1} \right)$.
With proposal $(P^*,Z^*,K^*)$, the acceptance rates in the allocation sampler
regimes are computed.

\subsection{MK}
\label{sec:mk}
 MK: choose ``add'' or ``delete'' one empty cluster with
  probability $1/2$. 
  If ``add'' move is chosen, randomly pick one community identifier
  from $[K+1]$ for the new empty community and rename the others as
  necessary; 
  if ``delete'' move is chosen, randomly pick one community from $[K]$,
  delete the community if it is empty and abandon the MK move if it is not empty.

  In the step MK, if ``add'' one empty community is chosen,
  accept the proposal with probability
  $\min \left( {1,
      \frac{{{\Pi _n}\left( {{P^*}|{Z^*}} \right)}}
      {{{\Pi _n}\left( {{P^{\left( t \right)}}|{Z^{\left( t \right)}}} \right)}}
      \frac{K}{{{K^*}}}}
    \frac{1}{n+K} \right)$;
  if ``delete'' one empty community is chosen, accept the proposal
    with probability   \[\min \left( {1,
      \frac{{{\Pi _n}\left( {{P^*}|{Z^*}} \right)}}
      {{{\Pi _n}\left( {{P^{\left( t \right)}}|{Z^{\left( t \right)}}} \right)}}
      \frac{K}{{{K^*}}}}
    {(n+K-1)} \right).\]  
  
\subsection{GS}
\label{sec:gs}

GS: relabel a random node.
  First randomly pick $i$ then generate $Z^*(i)$ according to
  $\Pi_{prop}(Z^*(i)=k) \propto \beta(Z^*,A)^{-1} \Pi(Z^*|K^*)$
  where $K^*=K^{(t)}$, the prior probability
  $\Pi(Z^*|K^*)=\int \Pi(Z^*|\alpha,K^*)\Pi(\alpha|K^*)d\alpha
  =\frac{\Gamma(K^*)}{\Gamma(n+K^*)} \prod_{1\le c \le K} \Gamma(n_c ^* +1)$ due to
  multinomial-Dirichlet conjugacy, 
  and $\beta(Z^*,A)
  = \prod_{1\le a \le b \le K}
  \frac{\Gamma(n_{ab}^*+2)}{\Gamma(O_{ab}^* +1)\Gamma(n_{ab}^* -O_{ab}^* +1)}$ is
  the coefficient corresponding to the proposal distribution of $P$.  
  Clearly, $Z^*(j)=Z(j)$ for all $j \neq i \in [n]$.

  In the step GS, suppose node $i$ is chosen and its original label $c_1$ is
  relabeled with $c_2$, then 
  accept the proposal with probability
  $\min \left( 1,
        \frac{{{\Pi _n}\left( {{P^*}|{Z^*}} \right)}}
        {{{\Pi _n}\left( {{P^{\left( t \right)}}|{Z^{\left( t \right)}}} \right)}}
    \right)$. 

\subsection{M3}
\label{sec:m3}

  M3:
  randomly pick two communities $c_1,c_2 \in [K]$,
  reassign nodes $\{i:Z(i)\in \{c_1,c_2 \}\}$ to $\{c_1, c_2 \}$ sequentially
  according to the following scheme.
  Start with $B_0=B_1=\emptyset$ and
  $A_0$ being the sub-network without nodes from community $c_1$ and $c_2$,
  define the assignment 
  $B_h = \{Z^*(x_i)\}_{i=1}^{h-1}$ with $x_i$ being the node index of the $i^{th}$ element in
  $\{i:Z(i)\in \{c_1,c_2 \}\}$,
  define the sub-network $A_h=A_{h-1}\cup \{x_h\}$ by appending one more node,
  define the assignment $Z_{B_h}^{c_j}$ for the sub-network $A_h$
  as the assignment with the node $x_h$ assigned to $c_j$,
  and define the size of communities in the sub-network $A_{h-1}$
  as $\{n_{h,c}\}_{c\in [K]}$. 
  For $i \in [n_c]$, assign the $i^{th}$ node of $\{i:Z(i)\in \{c_1,c_2 \}\}$
  to $c_1$ with probability $p_{B_i}^{c_1}$
  and to $c_2$ with probability $p_{B_i}^{c_2} \equiv 1-p_{B_i}^{c_1}$,
  where $\frac{p_{B_i}^{c_1}}{p_{B_i}^{c_2}}
  = \frac{\Pi(A_i,Z_{B_i}^{c_1},K,P)}{\Pi(A_i,Z_{B_i}^{c_2},K,P)}
  = \frac{\Pi(A_i|P, Z_{B_i}^{c_1})\Pi(P|Z_{B_i}^{c_1},K) \Pi(Z_{B_i}^{c_1}|K) \Pi(K)}
  {\Pi(A_i|P, Z_{B_i}^{c_2})\Pi(P|Z_{B_i}^{c_2},K) \Pi(Z_{B_i}^{c_2}|K) \Pi(K)}
  = \frac{\Pi(A_i|P, Z_{B_i}^{c_1})(n_{i,c_1}+1)}
  {\Pi(A_i|P, Z_{B_i}^{c_2})(n_{i,c_2}+1)}$. 

  To improve mixing, once $c_1$ and $c_2$ are drawn, shuffle 
  $\{i:Z(i)\in \{c_1,c_2 \}\}$ before the sequential reassignment. 
  Therefore, the ordering of node indices in the sequential reassignment is random.

  In the step M3, suppose community $c_1$ and $c_2$ are chosen, then 
  accept the proposal with probability 
  $\min \left( {1,
        \frac{{{\Pi _n}\left( {{P^*}|{Z^*}} \right)}}
        {{{\Pi _n}\left( {{P^{\left( t \right)}}|{Z^{\left( t \right)}}} \right)}}
        \frac{\prod_{i=1}^{n_c}p_{B_i}^{Z(i)}}
        {\prod_{i=1}^{n_c}p_{B_i}^{Z^*(i)}}
        \frac{{\Gamma \left( {n_{{c_1}}^{*}+1} \right)
            \Gamma \left( {n_{{c_2}}^{*}+1} \right)}}
        {{\Gamma \left( {n_{{c_1}}^{(t)} +1} \right)
            \Gamma \left( {n_{{c_2}}^{(t)}+1} \right)}}
        \frac{{\beta \left( {{Z^{\left( t \right)}},A} \right)}}
        {{\beta \left( {{Z^*},A} \right)}}} \right)$,
    where $n_c=n_{c_1}+n_{c_2}$.

\subsection{AE}
\label{sec:ae}
AE: merge two random clusters or split one cluster into two clusters with probability $1/2$. 
  If ``merge'' is chosen, randomly merge two clusters $c_1$ and $c_2$ 
  with $Z^*(i) = c_1$ for all $i\in \{j:Z(j)\in \{c_1,c_2\}\}$ and
  $Z^*(i)=Z(i)$ for all $i \notin \{j:Z(j)\in \{c_1,c_2\}\} $.
  The proposal probability is $\binom{K}{2}^{-1}$. 
  If ``split'' is chosen, randomly pick two cluster identifiers $\{c_1,c_2\}$ from $[K+1]$,
  renaming others' identifiers as necessary, and assign the nodes in cluster $c_1$ to the cluster
  $c_2$ with the random probability $p_c \sim U(0,1)$. 
  By integrating out $p_c$, 
  the proposal probability is
  $ \frac{{\Gamma \left( {{n_{{c_1}}} + 1} \right)
      \Gamma \left( {{n_{{c_2}}} + 1} \right)}}
  {K(K+1){\Gamma \left( {{n_c} + 2} \right)}}$. 

In the step AE, if ``merge'' two communities is chosen, 
  accept the proposal with probability
  $\min \left( {1,\frac{{{\Pi _n}\left( {{P^*}|{Z^*}} \right)}}
        {{{\Pi _n}\left( {{P^{\left( t \right)}}|{Z^{\left( t \right)}}} \right)}}
        \frac{{{K^{\left( t \right)}}}}{{{K^{*}}}} 
        \frac{{\beta \left( {{Z^{\left( t \right)}},A} \right)}}
        {{\beta \left( {{Z^*},A} \right)}}
        \frac{K^*+n}
        {{ {n_{c_1}^{\left( t \right)} + 1} }}} \right)$; 
    if ``split''  is chosen,
    accept the proposal with probability
  $\min \left( {1,\frac{{{\Pi _n}\left( {{P^*}|{Z^*}} \right)}}
        {{{\Pi _n}\left( {{P^{\left( t \right)}}|{Z^{\left( t \right)}}} \right)}}
        \frac{{{K^{\left( t \right)}}}}{{{K^*}}}
        \frac{{\beta \left( {{Z^{\left( t \right)}},A} \right)}}
        {{\beta \left( {{Z^*},A} \right)}}
        \frac{n_{c_1}^{(t)} + 1}{K+n}
        } \right)$.

\section{Complete simulation results}
\label{sec:compl-simul-results}
This section provides complete simulation results.
We choose $(k_0, n, \rho) \in \{3,5,7\} \times \{50, 75\} \times \{\frac{1}{2}, 1\}$,
and for each $(k_0, n, \rho)$ configuration,   
100 networks are generated from $SBM(Z_0, \rho P^0, n, k_0)$. 

To reduce Monte Carlo error and reach reasonable mixing, 
the Metropolis-Hastings algorithm and the allocation sampler collect $2 \times 10^4$ posterior draws for  each synthetic dataset  after discarding first $10^4$ draws as burn-in.
Both algorithms are initialized at $K=2$ and random membership assignment.

\begin{table}[H]
\centering
\begin{tabular}{cccccccccccc}
  \toprule
 & &&& \multicolumn{2}{c}{Case 1} & \multicolumn{2}{c}{Case 2} & \multicolumn{2}{c}{Case 3} & \multicolumn{2}{c}{Case 4} \\
  \cmidrule(lr){5-6}\cmidrule(lr){7-8}\cmidrule(lr){9-10} \cmidrule(lr){11-12}  
$k_0$& $n$ & $\rho$  & Method & Bias & RMSE & Bias & RMSE & Bias & RMSE & Bias & RMSE \\ 
  \midrule
  \multirow{8}{*}{$3$}  & 
  \multirow{8}{*}{$50$}  & 
  \multirow{4}{*}{$\frac{1}{2}$}  
   &  DD-SBM & 1.3 & 1.8 & -1.9 & 1.9 & -1.6 & 1.8 & {0.0} & 1.3 \\ 
  & && c-SBM & -0.5 & 0.8 & -1.9 & 1.9 & -1.8 & 1.9 & -1.0 & 1.0\\ 
  & && CLBIC & {-0.2} & {0.5} & {-1.2} & {1.3}& {-1.2} & {1.3 }& -1.1 & 1.3 \\ 
  &&& NCV & -0.6 & 0.9 & -2.0 & 2.0 & -2.0 & 2.0 & -2.0 & 2.0 
  \\ \cmidrule(lr){4-12}
  & &   \multirow{4}{*}{$1$}   
  & DD-SBM & 0.1 & 0.3 & -1.9 & 2.0 & {0.1} & {0.3} & {-0.6} & 1.0\\ 
  &&& c-SBM & -0.0 & 0.2 & -1.9 & 1.9 & -0.1 & 0.6 & -0.8 & 0.9 \\ 
  &&& CLBIC & {0.0} & {0.0} & {-1.3} & {1.4}& -0.3 & 0.6 & -0.9 & 1.0  \\ 
  &&& NCV & {0.0} & {0.0} & -2.0 & 2.0 & -0.3 & 0.9 & -0.8  & 0.9\\ 
  \cmidrule(lr){4-12}
  
  \multirow{8}{*}{$3$}  & 
  \multirow{8}{*}{$75$}  & 
  \multirow{4}{*}{$\frac{1}{2}$}  
   &  DD-SBM & 0.5 & 1.0 & -1.9 & 2.0 & -1.1 & 1.6 & {-0.6} & 1.1 \\ 
  & && c-SBM & -0.1 & 0.5 & -1.9 & 2.0 & -1.3 & 1.6 & -1.0 & 1.0\\ 
  & && CLBIC & {0.0} & {0.0} & {-1.0} & {1.0}& {-0.8} & {0.9 }& -0.9 & 1.0 \\ 
  &&& NCV & 0.0 & 0.1 & -2.0 & 2.0 & -1.9 & 1.9 & -1.9 & 2.0 
  \\ \cmidrule(lr){4-12}
  & &   \multirow{4}{*}{$1$}   
  & DD-SBM & 0.0 & 0.1 & -1.9 & 1.9 & {0.0} & {0.2} & {-0.7} & 0.9\\ 
  &&& c-SBM & 0.0 & 0.3 & -1.6 & 1.8 & 0.0 & 0.4 & -0.8 & 0.9 \\ 
  &&& CLBIC & {0.0} & {0.0} & {-1.0} & {1.0}& 0.0 & 0.0 & -0.9 & 1.0  \\ 
  &&& NCV & {0.0} & {0.0} & -2.0 & 2.0 & 0.0 & 0.0 & -0.9  & 1.0\\
  \cmidrule(lr){2-12}
  
  \multirow{8}{*}{$5$}  & 
  \multirow{8}{*}{$50$}  & 
  \multirow{4}{*}{$\frac{1}{2}$}  
   &  DD-SBM & -2.5 & 3.0 & -3.9 & 3.9 & -3.8 & 3.9 & {-2.0} & 2.3 \\ 
  & && c-SBM & -3.7 & 3.7 & -3.9 & 3.9 & -4.0 & 4.0 & -3.0 & 3.0\\ 
  & && CLBIC & {-3.1} & {3.1} & {-3.4} & {3.4}& {-3.3} & {3.3 }& -3.4 & 3.5 \\ 
  &&& NCV    & -4.0 & 4.0   & -4.0   & 4.0    & -4.0 & 4.0 & -4.0 & 4.0 
  \\ \cmidrule(lr){4-12}
  & &   \multirow{4}{*}{$1$}   
  & DD-SBM & 0.7 & 1.2 & -4.0 & 4.0 & {-3.6} & {3.6} & {-2.7} & 2.8\\ 
  &&& c-SBM & -1.0 & 1.4 & -3.9 & 3.9 & -3.7 & 3.8 & -2.9 & 2.9 \\ 
  &&& CLBIC & {-1.6} & {1.9} & {-3.3} & {3.4}& -3.2 & 3.2 & -2.8 & 2.9  \\ 
  &&& NCV & {-1.5} & {2.0} & -4.0 & 4.0 & -4.0 & 4.0 & -3.0  & 3.2\\
  
  \cmidrule(lr){3-12} 
  \multirow{8}{*}{$5$}  & 
  \multirow{8}{*}{$75$}  & 
  \multirow{4}{*}{$\frac{1}{2}$}  
   &  DD-SBM & -1.1 & 2.0 & -3.9 & 3.9 & -3.9 & 3.9 & {-2.4} & 2.6 \\ 
  & && c-SBM & -2.5 & 2.7 & -4.0 & 4.0 & -4.0 & 4.0 & -3.0 & 3.0\\ 
  & && CLBIC & {-2.5} & {2.6} & {-3.0} & {3.0}& {-3.0} & {3.0 }& -2.9 & 2.9 \\ 
  &&& NCV    & -3.8 & 3.9   & -4.0   & 4.0    & -4.0 & 4.0 & -3.9 & 3.9
  \\ \cmidrule(lr){4-12}
  & &   \multirow{4}{*}{$1$}   
  & DD-SBM & 0.0 & 0.5 & -4.0 & 4.0 & {-2.0} & {2.3} & {-2.8} & 2.9\\ 
  &&& c-SBM & -0.3 & 0.8 & -3.9 & 4.0 & -2.0 & 2.3 & -2.9 & 2.9 \\ 
  &&& CLBIC & {0.0} & {0.0} & {-3.0} & {3.0}& -2.8 & 2.8 & -2.7 & 2.7  \\ 
  &&& NCV & {0.0} & {0.0} & -4.0 & 4.0 & -3.8 & 3.9 & -2.6  & 2.7\\

  \cmidrule(lr){2-12} 
  \multirow{8}{*}{$7$}  & 
  \multirow{8}{*}{$50$}  & 
  \multirow{4}{*}{$\frac{1}{2}$}  
   &  DD-SBM & -5.5 & 5.6 & -5.9 & 5.9 & -5.9 & 5.9 & {-3.9} & 4.1 \\ 
  & && c-SBM & -5.9 & 5.9 & -5.9 & 5.9 & -6.0 & 6.0 & -5.0 & 5.1\\ 
  & && CLBIC & {-5.2} & {5.2} & {-5.3} & {5.3}& {-5.3} & {5.3 }& -5.4 & 5.5 \\ 
  &&& NCV    & -6.0 & 6.0   & -6.0   & 6.0    & -6.0 & 6.0 & -6.0 & 6.0
  \\ \cmidrule(lr){4-12}
  & &   \multirow{4}{*}{$1$}   
  & DD-SBM & -3.1 & 3.5 & -6.0 & 6.0 & {-6.0} & {6.0} & {-4.5} & 4.6\\ 
  &&& c-SBM & -4.7 & 4.8 & -5.9 & 6.0 & -5.9 & 5.9 & -4.9 & 5.0 \\ 
  &&& CLBIC & {-4.8} & {4.9} & {-5.3} & {5.3}& -5.3 & 5.3 & -4.8 & 4.9  \\ 
  &&& NCV & {-6.0} & {6.0} & -6.0 & 6.0 & -6.0 & 6.0 & -5.5  & 5.6\\

    \cmidrule(lr){3-12} 
  \multirow{8}{*}{$7$}  & 
  \multirow{8}{*}{$75$}  & 
  \multirow{4}{*}{$\frac{1}{2}$}  
   &  DD-SBM & -4.6 & 4.7 & -6.0 & 6.0 & -5.9 & 6.0 & {-4.3} & 4.4 \\ 
  & && c-SBM & -5.4 & 5.4 & -5.9 & 5.9 & -5.9 & 5.9 & -5.0 & 5.0\\ 
  & && CLBIC & {-4.8} & {4.9} & {-5.0} & {5.0}& {-5.0} & {5.0 }& -4.8 & 4.9 \\ 
  &&& NCV    & -6.0 & 6.0   & -6.0   & 6.0    & -6.0 & 6.0 & -6.0 & 6.0
  \\ \cmidrule(lr){4-12}
  & &   \multirow{4}{*}{$1$}   
  & DD-SBM & -1.4 & 2.0 & -5.9 & 6.0 & {-5.5} & {5.5} & {-4.8} & 4.8\\ 
  &&& c-SBM & -2.3 & 2.6 & -5.9 & 5.9 & -5.3 & 5.4 & -5.0 & 5.0 \\ 
  &&& CLBIC & {-3.4} & {3.5} & {-5.0} & {5.0}& -5.0 & 5.0 & -4.7 & 4.7  \\ 
  &&& NCV & {-3.2} & {3.5} & -6.0 & 6.0 & -6.0 & 6.0 & -4.8  & 4.8\\

  \bottomrule 
\end{tabular}
\caption{Bias and RMSE of $\hat{K}$.}  
\label{tab:compare-methods-all}
\end{table}
 


\end{document}